\numberwithin{equation}{section}
\theoremstyle{plain}
\newtheorem{thm}{Theorem}[section]
 \newtheorem{lemma}[thm]{Lemma}
\newtheorem{prop}[thm]{Proposition}
\newtheorem*{thma}{Theorem A}
\newtheorem*{thmb}{Theorem B}
\newtheorem*{thmc}{Theorem C}
\theoremstyle{definition}
\newcommand{\dlabel}[1]{\ifmmode \text{\ttfamily \upshape [#1] } \else
{\ttfamily \upshape [#1] }\fi \label{#1}}
\newcommand{\C}{\operatorname{C} }
\newcommand{\Z}{\operatorname{Z} }
\newcommand{\het}{\operatorname{ht}}
\newcommand{\gen}[1]{\left < #1 \right >}
\newcommand{\Aut}{\operatorname{Aut} }
\newcommand{\Hom}{\operatorname{Hom} }
\newcommand{\Inn}{\operatorname{Inn} }
\newcommand{\Autcent}{\operatorname{Autcent} }
\begin{document}

\setlength{\baselineskip}{15pt}

\title{On finite $p$-groups with abelian automorphism group}

\author{Vivek K. Jain}
\address{Department of Mathematics, Central University of Bihar, Patna  800 014, INDIA.}
\email{jaijinenedra@gmail.com}

\author{Pradeep K. Rai}
\address{School of Mathematics, Harish-Chandra Research Institute, Chhatnag Road, Jhunsi, Allahabad 211019, INDIA.}
\email{pradeeprai@hri.res.in}

\author{Manoj K. Yadav}
\address{School of Mathematics, Harish-Chandra Research Institute, Chhatnag Road, Jhunsi, Allahabad 211019, INDIA.}
\email{myadav@hri.res.in}

\subjclass[2010]{Primary 20D45; Secondary 20D15}
\keywords{finite $p$-group, central automorphism, abelian automorphism group}

\begin{abstract}
We construct, for the first time, various types of specific non-special finite $p$-groups having abelian automorphism group. More specifically, we construct groups $G$ with abelian automorphism group such that $\gamma_2(G) < \Z(G) < \Phi(G)$, where $\gamma_2(G)$, $\Z(G)$ and $\Phi(G)$ denote the commutator subgroup, the center and the Frattini subgroup of $G$ respectively. For a finite $p$-group $G$ with elementary abelian automorphism group, we show that at least one of the following two conditions holds true: (i)  $\Z(G) = \Phi(G)$ is elementary abelian; (ii)  $\gamma_2(G) = \Phi(G)$ is elementary abelian, where $p$ is an odd prime.  We construct examples to show the existence of groups $G$ with  elementary abelian automorphism group for which exactly one of the above two conditions holds true. 

\end{abstract}

\maketitle

\section{Introduction}

Let $G$  be a finite group. An automorphism $\alpha$ of $G$ is called \emph{central} if $x^{-1}\alpha(x) \in \Z(G)$ for all $x \in G$, where $\Z(G)$ denotes the center of $G$. The set of all central automorphisms of $G$ is a normal subgroup of $\Aut(G)$, the group of all automorphisms of $G$. We denote this group by $\Autcent(G)$. Notice that $\Autcent(G) = \C_{\Aut(G)}(\Inn(G))$, the centralizer of $\Inn(G)$ in $\Aut(G)$, and $\Autcent(G) = \Aut(G)$ if $\Aut(G)$ is abelian.  We denote the commutator and Frattini subgroup of $G$ with $\gamma_2(G)$ and $\Phi(G)$, respectively.  Let  $G^{p^i} = \gen{x^{p^i} \mid x \in G}$ and $G_{p^i} = \gen{x \in G \mid x^{p^i} = 1}$, where $i \ge 1$ is an integer.  For finite abelian groups $H$ and $K$, $\Hom(H, K)$ denotes the group of all homomorphisms from $H$ to $K$. If $H$ is a subgroup (proper subgroup) of $G$, then we write $H \le G$ ($H < G$). A group $G$ is said to be \emph{purely non-abelian} if it does not have a non-trivial abelian direct factor. Throughout the paper, any unexplained $p$ always denotes an odd prime.

In this paper we construct, for the first time, various types of non-special finite $p$-groups $G$ such that $\Aut(G)$ is abelian.
 The story began in 1908 with the following question of H. Hilton \cite{pH08}: {\it Whether a non-abelian group can have an abelian group of isomorphisms (automorphisms)}. In 1913, G. A. Miller \cite{gM13}  constructed a non-abelian group $G$ of order $64$ such that $\Aut(G)$ is an elementary abelian group of order $128$. More examples of such $2$-groups were constructed in \cite{mC87, aJ02, rS82}.  For an odd prime $p$, the first example of a finite $p$-group $G$ such that $\Aut(G)$ is abelian was constructed by H. Heineken and H. Liebeck \cite{HL74} in 1974. In 1975, D. Jonah and M. Konvisser \cite{JK75} constructed $4$-generated groups of order $p^8$ such that  $\Aut(G)$ is an elementary abelian group of order $p^{16}$, where $p$ is any prime.  In 1975, by generalizing the constructions of Jonah and Konvisser, B. Earnley \cite[Section 4.2]{bE75} constructed $n$-generated special $p$-groups $G$ such that $\Aut(G)$ is abelian, where $n \ge 4$ is an integer and $p$ is any prime number. Among other things, Earnley also proved that there is no $p$-group $G$ of order $p^5$ or less such that $\Aut(G)$ is abelian. On the way to constructing finite $p$-groups of class $2$ such that all normal subgroups of $G$ are characteristic, in 1979 H. Heineken \cite{hH79} produced groups $G$ such that $\Aut(G)$ is abelian. In 1994, M. Morigi \cite{mM94} proved that there exists no group of order $p^6$ whose group of  automorphisms is abelian and constructed groups $G$ of order $p^{n^2 + 3n +3}$ such that $\Aut(G)$ is abelian, where $n$ is a positive integer. In particular, for $n=1$, it provides a group of order $p^7$ having an abelian automorphism group. 

There have also been attempts to get structural information of finite groups having abelian automorphism group. In 1927, 
C. Hopkins \cite{cH27}, among other things, proved that a finite $p$-group $G$ such that $\Aut(G)$ is abelian, can not have a non-trivial abelian direct factor. In 1995, M. Morigi \cite{mM95} proved that the minimal number of generators for a $p$-group with abelian automorphism group is $4$. In 1995, P. Hegarty \cite{pH95} proved that if $G$ is a non-abelian $p$-group such that $\Aut(G)$ is abelian, then $|\Aut(G)| \ge p^{12}$, and the minimum is obtained by the group of order $p^7$ constructed by M. Morigi. Moreover, in 1998, G. Ban and S. Yu \cite{BY98} obtained independently the same result and proved that if $G$ is a group of order $p^7$ such that  $\Aut(G)$ is abelian, then $|\Aut(G)| = p^{12}$.

We remark here that all the examples (for an odd prime $p$) mentioned above are special $p$-groups. 
In 2008, A. Mahalanobis \cite{aM08} published the following conjecture: {\it For an odd prime $p$, any finite $p$-group having abelian automorphism group is special}. The first and third authors \cite{JY12} provided counter examples to this conjecture by constructing a class of  non-special finite $p$-groups $G$ such that $\Aut(G)$ is abelian. These counter examples, constructed in \cite{JY12}, enjoy the following properties: (i) $|G| = p^{n+5}$, where $p$ is an odd prime and $n$ is an integer $\geq 3$; (ii) $\gamma_2(G)$ is a proper subgroup of $\Z(G) = \Phi(G)$; (iii) exponents of $\Z(G)$ and $G/\gamma_2(G)$ are same and it is equal to $p^{n-1}$; (iv) $\Aut(G)$ is abelian of exponent $p^{n-1}$. 

Having all this information in hands, one might expect that some weaker form of the conjecture of Mahalanobis  still holds true. Two obvious weaker forms of the conjecture are: (WC1) For a finite $p$-group $G$ with $\Aut(G)$ abelian, $\Z(G) = \Phi(G)$ always holds true; (WC2) For a finite $p$-group $G$ with $\Aut(G)$ abelian and $\Z(G) \neq \Phi(G)$, $\gamma_2(G) = \Z(G)$ always holds true. So, on the way to exploring some general structure on the class of such groups $G$, it is natural to ask the following question:
\vspace{.1in}

\noindent{\bf Question.} Does there exist a finite $p$-group $G$ such that $\gamma_2(G) \le \Z(G) < \Phi(G)$ and $\Aut(G)$ is abelian?
\vspace{.1in}

Disproving (WC1) and (WC2), we provide affirmative answer to this question,  in the following theorem, which we prove in Section 3:

\begin{thma}
For every positive integer $n \ge 4$ and every odd prime $p$, there exists a group $G$ of order $p^{n+10}$ and  exponent $p^n$ such that 
\begin{enumerate}
 \item for $n = 4$, $\gamma_2(G) = \Z(G) < \Phi(G)$ and $\Aut(G)$ is abelian;
\item for $n \ge 5$, $\gamma_2(G) < \Z(G) < \Phi(G)$ and $\Aut(G)$ is abelian.
\end{enumerate}
Moreover, the order of $\Aut(G)$ is $p^{n+20}$.
\end{thma}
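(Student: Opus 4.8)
The plan is to construct an explicit finite presentation for $G$ in terms of generators and relators, then verify the three claimed properties (the order, the subgroup inclusions among $\gamma_2(G)$, $\Z(G)$, $\Phi(G)$, and the abelianness of $\Aut(G)$) directly. Following the tradition of Heineken--Liebeck, Jonah--Konvisser, and especially the earlier counterexample in \cite{JY12}, I would take $G$ to be generated by a small number of elements (likely four, in view of Morigi's theorem that four is the minimal number of generators) subject to commutator relations that force $\gamma_2(G)$ to sit \emph{strictly} inside $\Z(G)$, together with power relations of the form $x_i^{p^{k_i}} = w_i$ that push certain $p$-th powers into the center but not into $\gamma_2(G)$, thereby arranging $\Z(G) < \Phi(G)$. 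The exponent $p^n$ and the order $p^{n+10}$ dictate how many central generators of which orders are needed; the extra freedom in the power relations (absent in special groups, where every generator has order $p$ modulo the center) is precisely what lets $\Z(G)$ be strictly larger than $\gamma_2(G)$.

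First I would fix the presentation and read off a normal form for the elements of $G$, giving a $\mathbb{Z}$-basis (as an abelian-group direct sum) for each of $G/\Phi(G)$, $\Phi(G)/\gamma_2(G)$, $\Z(G)/\gamma_2(G)$, and $\gamma_2(G)$ itself; this simultaneously pins down $|G| = p^{n+10}$ and the strict chain $\gamma_2(G) < \Z(G) < \Phi(G)$ (with equality $\gamma_2(G)=\Z(G)$ collapsing only in the boundary case $n=4$, which I would treat by checking that the relevant central power element lands in $\gamma_2(G)$ exactly when $n=4$). Next, since $\Aut(G)$ abelian forces $G$ to be purely non-abelian (Hopkins \cite{cH27}) and forces every automorphism to be central (as $\Autcent(G)=\Aut(G)$ when $\Aut(G)$ is abelian), I would reduce the whole problem to analysing $\Autcent(G)$: using the standard identification of central automorphisms with $\Hom(G/\gamma_2(G),\Z(G))$ fixing $\Z(G)$ pointwise (valid because $G$ is purely non-abelian), I would first compute $|\Autcent(G)|$ via the formula $|\Autcent(G)| = |\Hom(G/\gamma_2(G),\Z(G))|$ adjusted by the action on the center, and check it equals $p^{n+20}$.

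The key step, and the main obstacle, is proving $\Aut(G) = \Autcent(G)$, i.e. that $G$ has \emph{no} non-central automorphisms, so that the central automorphisms I can count are in fact all of $\Aut(G)$. The standard route is to show that every automorphism of $G$ fixes $G/\gamma_2(G)$ up to the central correction, equivalently that the induced automorphism on $G/\Z(G)$ (or on $G/\Phi(G)$ together with the commutator pairing $\gamma_2$) is trivial. Concretely I would show that the alternating bilinear commutator map $\bar{G}\times\bar{G}\to\gamma_2(G)$ on $\bar G = G/\Z(G)$, together with the $p$-power maps, is rigid enough that any automorphism must act as the identity modulo $\Z(G)$; this is where the carefully chosen structure constants in the defining relators must be verified to leave no room for a nontrivial linear map preserving the whole Lie-type data. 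Once this rigidity is established, $\Aut(G)=\Autcent(G)$ is abelian (central automorphisms of a class-$2$ group with $\gamma_2(G)\le\Z(G)$ commute because they differ from the identity by homomorphisms into the center), $|\Aut(G)|=p^{n+20}$, and the theorem follows. I expect the bilinear-form rigidity computation, and the bookkeeping that separates $\gamma_2(G)$ from $\Z(G)$ in the power relations, to be the genuinely delicate parts; the remaining verifications are routine normal-form manipulations.
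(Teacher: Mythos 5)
Your proposal has two genuine gaps. The first is that you never actually produce the group: for an existence statement of this kind the explicit presentation \emph{is} the substance of the proof, and ``commutator relations that force $\gamma_2(G)$ to sit strictly inside $\Z(G)$ together with power relations of the form $x_i^{p^{k_i}}=w_i$'' is a description of the target, not a construction. The paper's argument rests on one specific four-generator presentation (the group $G$ of \eqref{group1}, with relations such as $[x_1,x_2]=x_2^{p^2}$ and $[x_2,x_3]=x_1^{p^{n-2}}$), and the verification that every automorphism is central is a long case analysis on a system of congruences in the exponents $a_{ij}$ extracted from the defining relations --- exactly the ``rigidity computation'' you defer. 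Deferring that computation is not by itself fatal to an outline, but without a candidate group there is nothing for the rigidity argument to be applied to, so the proposal does not yet contain a proof of the existence claim.

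The second gap is a step that is simply false. You assert that once $\Aut(G)=\Autcent(G)$ is established, abelianness follows because central automorphisms of a class-$2$ group with $\gamma_2(G)\le\Z(G)$ ``commute because they differ from the identity by homomorphisms into the center.'' Writing $\alpha(x)=x\,f_\alpha(x)$ with $f_\alpha\in\Hom(G,\Z(G))$, one has $(\alpha\circ\beta)(x)=x\,f_\alpha(x)\,\alpha(f_\beta(x))$; since a central automorphism may act nontrivially on $\Z(G)$, the Adney--Yen bijection $\alpha\mapsto f_\alpha$ is not a group isomorphism onto $\Hom(G/\gamma_2(G),\Z(G))$, and $\Autcent(G)$ is very often non-abelian for purely non-abelian class-$2$ groups --- if your claim were correct, Theorem \ref{thmAY} would be vacuous. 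The paper closes this step by verifying the Adney--Yen criterion for the specific group: with $R=\{z\in\Z(G)\mid |z|\le p^{n-2}\}$ and $K=G^{p^2}\gamma_2(G)$ one checks $R=K=\Z(G)$ and that $R/\gamma_2(G)=\gen{x_1^{p^2}\gamma_2(G)}$ is cyclic. Some verification of this kind, tied to the exponents of $\Z(G)$, $\gamma_2(G)$ and $G/\gamma_2(G)$ in the constructed group, is unavoidable and cannot be replaced by the general principle you invoke.
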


One more weaker form of the above said conjecture is: (WC3) If $\Aut(G)$ is an elementary abelian $p$-group, then $G$ is special. 
As remarked above, all $p$-groups $G$ (except the ones in \cite{JY12}) available in the literature and having abelian automorphism group are special $p$-groups. Thus it follows that $\Aut(G)$, for all such groups $G$, is elementary abelian. Y. Berkovich and Z. Janko \cite[Problem 722]{BJ08} published the following long standing problem: {\it (Old problem) Study the $p$-groups $G$ with elementary abelian $\Aut(G)$}.

Let $G$ be an arbitrary finite $p$-group such that $\Aut(G)$ is elementary abelian. Then it follows from Theorem \ref{thm1BL} (proved in Section 4 below) that one of the following two conditions necessarily holds true: (C1) $\Z(G) = \Phi(G)$ is elementary abelian; (C2) $\gamma_2(G) = \Phi(G)$ is elementary abelian. {\it So one might expect that for such groups $G$ both of the conditions (C1) and (C2) hold true, i.e., WC(3) holds true, or, a little less ambitiously, (C1) always holds true or (C2) always holds true}. In the following two theorems, which we prove in Section 4, we show that none of the statements in the preceding sentence holds true. 

\begin{thmb}
 There exists a group $G$ of order $p^9$ such that $\Aut(G)$ is elementary abelian of order $p^{20}$, $\Phi(G) < \Z(G)$ and $\gamma_2(G) = \Phi(G)$ is elementary abelian. 
\end{thmb}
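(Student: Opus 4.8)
The plan is to prove Theorem B constructively, and a short numerical analysis already pins down the only possible shape of $G$. If $G$ is minimally $d$-generated, then $\Phi(G)=\gamma_2(G)$ has order $p^{9-d}$; since $G/\gamma_2(G)=G/\Phi(G)$ is elementary abelian of rank $d$, and anticipating that $G$ is purely non-abelian with $\Aut(G)=\Autcent(G)$, one gets $|\Aut(G)|=|\Hom(G/\gamma_2(G),\Z(G))|=|\Omega_1(\Z(G))|^{d}$. Requiring this to equal $p^{20}$, together with $\Phi(G)<\Z(G)$ and the class-two inequality $|\gamma_2(G)|\le p^{\binom{r}{2}}$ with $p^{r}=|G/\Z(G)|$, forces $d=5$, $\gamma_2(G)=\Phi(G)\cong(\mathbb{Z}/p\mathbb{Z})^{4}$, and $\Z(G)\cong\mathbb{Z}/p^{2}\mathbb{Z}\times(\mathbb{Z}/p\mathbb{Z})^{3}$ of order $p^{5}$. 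I would therefore seek a power--commutator presentation on generators $x_{1},\dots,x_{5}$ whose commutators $[x_i,x_j]$ span an elementary abelian group of rank $4$, with $x_i^{p}\in\gamma_2(G)$ for all $i$ (so that $\Phi(G)=\gamma_2(G)$) and exactly one independent central element of order $p^{2}$.

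Next I would carry out the bookkeeping from the presentation: a collection/consistency check to confirm $|G|=p^{9}$; the identification of $\gamma_2(G)$, $\Phi(G)$ and $\Z(G)$ together with the isomorphism type of $\Z(G)$; and the strict inclusion $\Phi(G)<\Z(G)$. The single most useful fact to isolate is $\gamma_2(G)=\Omega_1(\Z(G))$ (both elementary abelian of rank $4$, with $\Z(G)$ having four cyclic factors). This immediately yields that $G$ is purely non-abelian: an abelian direct factor may be taken cyclic; the relation $\gamma_2(G)=\Phi(G)$ rules out a factor $\mathbb{Z}/p^{k}\mathbb{Z}$ with $k\ge2$, while a $\mathbb{Z}/p\mathbb{Z}$ factor would require a central element of order $p$ lying outside $\Phi(G)$, contradicting $\Omega_1(\Z(G))=\gamma_2(G)=\Phi(G)$.

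The heart of the proof is to show $\Aut(G)=\Autcent(G)$, i.e. that every automorphism acts trivially on $G/\Z(G)$ (equivalently $\alpha(g)g^{-1}\in\Z(G)$ for all $g$, which is exactly the condition that $\alpha$ centralises $\Inn(G)$). I would let $\alpha\in\Aut(G)$ act on the $\mathbb{F}_p$-space $G/\Phi(G)$ and use that $\alpha$ must preserve the alternating commutator form $G/\Z(G)\times G/\Z(G)\to\gamma_2(G)$; exploiting the rigidity built into the chosen commutator relations, I would deduce that the induced map on $G/\Z(G)$ is the identity, i.e. $\alpha(x_i)\equiv x_i\pmod{\Z(G)}$ for every generator. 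This is the main obstacle and the place where the commutators must be selected with care: the relations have to be rigid enough that no non-central linear substitution of the generators preserves all of them, yet loose enough to keep $\Z(G)$ strictly larger than $\Phi(G)$.

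Finally I would pin down $\Aut(G)=\Autcent(G)$ precisely. Every central automorphism has the form $\alpha_f(x)=x\,f(x\gamma_2(G))$ for a unique $f\in\Hom(G/\gamma_2(G),\Z(G))$, and one checks that each such $\alpha_f$ is bijective, so $f\mapsto\alpha_f$ is a bijection onto $\Autcent(G)$. A direct computation gives $\alpha_f\alpha_g=\alpha_{f+g+f\circ\rho\circ g}$, where $\rho\colon\Z(G)\to G/\gamma_2(G)$ is the natural map (so $\ker\rho=\gamma_2(G)$). The decisive point is that $G/\gamma_2(G)$ has exponent $p$, whence every $f$ has image in $\Omega_1(\Z(G))=\gamma_2(G)=\ker\rho$; therefore $\rho\circ g=0$, the correction term vanishes, and $f\mapsto\alpha_f$ is a group isomorphism from $\Hom(G/\gamma_2(G),\Z(G))$ onto $\Aut(G)$. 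Since the source is elementary abelian of rank $\dim_{\mathbb{F}_p}(G/\gamma_2(G))\cdot\dim_{\mathbb{F}_p}\Omega_1(\Z(G))=5\cdot4=20$, this shows $\Aut(G)$ is elementary abelian of order $p^{20}$, completing the proof.
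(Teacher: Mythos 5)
Your proposal is a sound \emph{strategy} that matches the architecture of the paper's proof --- the paper likewise constructs an explicit $5$-generated group $G_1$ of order $p^9$ with $\gamma_2(G_1)=\Phi(G_1)$ elementary abelian of rank $4$ and $\Z(G_1)\cong \C_{p^2}\times \C_p\times \C_p\times \C_p$, proves that every automorphism is central, and then concludes that $\Autcent(G_1)\cong\Hom(G_1/\gamma_2(G_1),\Z(G_1))$ is elementary abelian of order $p^{20}$. Your preliminary numerical analysis pinning down $d=5$ and the isomorphism types of $\gamma_2(G)$ and $\Z(G)$ is a nice touch not present in the paper, and your last paragraph essentially reproves the relevant direction of Jafari's criterion (Theorem \ref{thmJ}) rather than citing it; both of these are fine.

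However, there is a genuine gap: since Theorem B is an existence statement, the proof must actually exhibit a group, and you never write down a presentation. More seriously, you explicitly defer the one step that carries all the mathematical weight --- showing that $\Aut(G)=\Autcent(G)$, i.e.\ that no automorphism induces a nontrivial map on $G/\Z(G)$. You correctly identify this as ``the main obstacle'' and note that the commutator relations must be chosen with enough rigidity, but rigidity is exactly what has to be verified for a concrete choice of relations, and it is far from automatic: most presentations satisfying your numerical constraints will admit non-central automorphisms. In the paper this verification occupies the bulk of the argument (Lemma \ref{lemmab} and the eighteen congruences of Table 1, which are extracted by applying an arbitrary automorphism $\alpha(x_i)=x_i\prod_j x_j^{a_{ij}}$ to each defining relation and comparing exponents, then solved to force all $a_{ij}\equiv 0 \bmod p$). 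Your appeal to preservation of the alternating commutator form on $G/\Z(G)$ is also not sufficient by itself: the relations of the paper's group $G_1$ tie commutators to specific $p$-th powers of generators (e.g.\ $[x_1,x_2]=x_1^p$, $[x_2,x_5]=x_4^p$), and it is this interaction between the power map and the commutator structure --- not just the form on $G/\Z(G)$ --- that forces the induced map to be the identity. Without a concrete presentation and the accompanying computation, the existence claim is not established.
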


\begin{thmc}
 There exists a group $G$ of order $p^8$ such that $\Aut(G)$ is elementary abelian of order $p^{16}$, $\gamma_2(G) < \Phi(G)$ and $\Z(G) = \Phi(G)$ is elementary abelian.
\end{thmc}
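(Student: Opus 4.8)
The plan is to exhibit $G$ by an explicit power–commutator presentation on four generators $x_1,x_2,x_3,x_4$, of nilpotency class $2$ and exponent $p^{2}$, engineered so that the six basic commutators $[x_i,x_j]$ span a subgroup $\gamma_2(G)$ of $\mathbb{F}_p$-rank strictly less than $4$, while the four $p$-th powers $x_i^{p}$ together with $\gamma_2(G)$ span an elementary abelian group of rank exactly $4$ that is simultaneously the centre and the Frattini subgroup. Concretely I would impose relations $[x_i,x_j]=w_{ij}$ and $x_i^{p}=v_i$, with $w_{ij},v_i$ words in a fixed central elementary abelian "bottom layer", chosen so that (i) every generator is non-central (forcing $\gamma_2(G)\neq 1$); (ii) the span of the $w_{ij}$ has dimension $<4$, giving $\gamma_2(G)<\Phi(G)$; and (iii) the $v_i$ fill out the remaining directions so that $\Z(G)=\Phi(G)$ is elementary abelian of rank $4$. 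A routine collection argument then gives $|G|=p^{8}$, $G/\Phi(G)\cong\mathbb{F}_p^{4}$, and $\gamma_2(G)<\Phi(G)=\Z(G)\cong\mathbb{F}_p^{4}$. Since $\Z(G)\le\Phi(G)$, the group $G$ has no non-trivial abelian direct factor, i.e.\ $G$ is purely non-abelian.

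Next I would reduce everything to the single assertion that every automorphism of $G$ is central. Granting this, the conclusion follows from general principles. Because $G$ is purely non-abelian, the Adney--Yen correspondence $\alpha\mapsto f_\alpha$, where $f_\alpha(g)=g^{-1}\alpha(g)$, gives $|\Autcent(G)|=|\Hom(G/\gamma_2(G),\Z(G))|$; as $G/\gamma_2(G)$ is $4$-generated and $\Z(G)\cong\mathbb{F}_p^{4}$, this order is $p^{4\cdot 4}=p^{16}$. Moreover, since $\Z(G)=\Phi(G)$ is elementary abelian, each homomorphism $f_\alpha$ kills $\Phi(G/\gamma_2(G))=(G/\gamma_2(G))^{p}$ and hence vanishes on $\Phi(G)=\Z(G)$; thus every central automorphism fixes $\Z(G)$ pointwise, the map $\alpha\mapsto f_\alpha$ becomes an injective homomorphism into the elementary abelian group $\Hom(G/\gamma_2(G),\Z(G))$, and so $\Autcent(G)$ is elementary abelian of order $p^{16}$. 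This is the mechanism underlying the dichotomy of Theorem \ref{thm1BL}. Consequently, once $\Aut(G)=\Autcent(G)$ is established, $\Aut(G)$ is elementary abelian of order $p^{16}$.

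The hard part, and the only genuinely group-specific step, is to prove that $\Aut(G)=\Autcent(G)$, equivalently that every $\alpha\in\Aut(G)$ induces the identity on $G/\Phi(G)$ (recall $\Z(G)=\Phi(G)$). I would argue via the two canonical invariants on $V:=G/\Phi(G)$: the alternating $\mathbb{F}_p$-bilinear form $\omega\colon V\times V\to\gamma_2(G)$ induced by the commutator, and the additive map $\pi\colon V\to\Phi(G)$ induced by $x\mapsto x^{p}$ (additive because $p$ is odd and $\Z(G)$ has exponent $p$). Both $\omega$ and $\pi$ are equivariant with respect to the induced actions of $\alpha$ on $V$, on $\gamma_2(G)$ and on $\Phi(G)$. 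Applying $\alpha$ to each defining relation $[x_i,x_j]=w_{ij}$ and $x_i^{p}=v_i$ and comparing coefficients in the rank-$4$ space $\Phi(G)$ yields a linear system in the entries of the matrix $A\in\operatorname{GL}_4(\mathbb{F}_p)$ describing $\alpha$ on $V$. The whole point of the construction is to choose the words $w_{ij}$ and $v_i$ so that this system is rigid, i.e.\ its only solution is $A=I$; arranging and verifying this joint non-degeneracy of the pair $(\omega,\pi)$ is precisely the main obstacle. Once $A=I$ is forced, $\alpha$ is central.

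Finally I would record that $G$ realises exactly the first alternative of the dichotomy: $\Z(G)=\Phi(G)$ is elementary abelian, while $\gamma_2(G)<\Phi(G)$ shows $\gamma_2(G)\neq\Phi(G)$, so the condition $\gamma_2(G)=\Phi(G)$ fails. Combined with the order computations above, this produces a group $G$ of order $p^{8}$ with $\Aut(G)$ elementary abelian of order $p^{16}$, $\gamma_2(G)<\Phi(G)$, and $\Z(G)=\Phi(G)$ elementary abelian, as required.
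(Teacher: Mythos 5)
Your outline correctly identifies the overall architecture that the paper itself uses: exhibit a class-$2$, exponent-$p^2$ group on four generators with $\gamma_2(G)$ of rank $3$ inside $\Z(G)=\Phi(G)\cong(\mathbb{Z}/p)^4$, prove that every automorphism is central by applying $\alpha$ to the defining relations and solving the resulting congruences, and then get ``elementary abelian of order $p^{16}$'' from the Adney--Yen count $|\Autcent(G)|=|\Hom(G/\gamma_2(G),\Z(G))|=p^{16}$ together with the fact that $\exp\Z(G)=p$ forces $\Autcent(G)$ to have exponent $p$ (this is exactly Jafari's criterion, Theorem \ref{thmJ}, which the paper invokes). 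That surrounding reasoning is sound.

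The genuine gap is that you never produce the group, and you explicitly defer the one step that carries all the mathematical content: you write that ``arranging and verifying this joint non-degeneracy of the pair $(\omega,\pi)$ is precisely the main obstacle,'' but an existence theorem is not proved by describing the properties a witness ought to have. It is not automatic that words $w_{ij}$ and $v_i$ with the stated numerical features can be chosen so that the induced linear system forces $A=I$; indeed the literature shows such rigidity is delicate (e.g.\ no group of order $\le p^6$ has abelian automorphism group, and a generic choice of structure constants will admit non-central automorphisms, for instance diagonal ones rescaling the generators). The paper's proof of Theorem C consists precisely of writing down the specific presentation \eqref{group3} -- with relations such as $[x_1,x_3]=[x_1,x_4]=[x_3,x_4]=x_4^p$, $[x_2,x_3]=x_1^p$, $[x_2,x_4]=x_2^p$, $[x_1,x_2]=1$ -- and then grinding through the sixteen congruences of Table 2 to show $a_{ij}\equiv 0 \pmod p$ for all $i,j$; the asymmetry of these relations is what kills the diagonal and permutation automorphisms your abstract setup would otherwise allow. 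Until you specify the $w_{ij}$, $v_i$ and carry out (or otherwise certify) that elimination, the proof is incomplete at its central point. A secondary, smaller omission: you should also verify for your eventual presentation that the group really has order $p^8$ and that $\Z(G)$ is not larger than $\Phi(G)$ (i.e.\ no generator accidentally becomes central), which the paper handles in Lemma \ref{lemma3}.
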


Now we review  non-special $2$-groups having abelian automorphism group. In contrast to $p$-groups for odd primes, there do exist finite $2$-groups $G$ with $\Aut(G)$ abelian and $G$ satisfies either of the following two properties: (P1) $G$ is $3$-generated; (P2) $G$ has a non-trivial abelian direct factor. The first $2$-group having abelian automorphism group was constructed by G. A. Miller \cite{gM13} in 1913. This is a $3$-generated group and, as mentioned above, it has order $64$ with elementary abelian automorphism group of order $128$. B. Earnley \cite{bE75} showed that there are two more groups of order $64$ having elementary abelian automorphism group. These groups are also $3$-generated. Further B. Earnley \cite[Theorem 2.3]{bE75} gave a complete description of $2$-groups satisfying (P2) and having abelian automorphism group and established the existence of such groups. 

Let $G$ be a purely non-abelian finite $2$-group such that $\Aut(G)$ is elementary abelian. Thus $\Aut(G) = \Autcent(G)$. Then $G$ satisfies one of the three conditions of Theorem \ref{thmJ1}. We here record that there exist groups $G$ which satisfy exactly one condition of this theorem. It is easy to show that the $2$-group $G_2$ constructed in \eqref{group2} satisfies only the first condition of Theorem \ref{thmJ1} and  the $2$-group $G_3$ constructed in \eqref{group3} satisfies only the second condition of Theorem \ref{thmJ1}. That $\Aut(G_2)$ and $\Aut(G_3)$ are elementary abelian, can be checked using GAP \cite{gap}. The examples of $2$-groups $G$ satisfying only the third condition of Theorem \ref{thmJ1} with $\Aut(G)$ elementary abelian were constructed by A. Miller \cite{gM13} and M. J. Curran \cite{mC87}. 

The examples constructed in Theorems A, B and C indicate that it is difficult to put an obvious structure on the class of groups $G$ such that $\Aut(G)$ is abelian or even elementary abelian. We hope that this paper will be helpful in giving a direction to the research area under consideration. We remark that many non-isomorphic groups, satisfying the conditions of the above theorems, can be obtained by making suitable changes in the presentations given in \eqref{group1}, \eqref{group2} and \eqref{group3} below. We conclude this section with a further remark that the kind of examples constructed here may be useful in cryptography (see \cite{aM08} for more details).

\section{Some prerequisites and basic results}

We start with the following two theorems of M. H. Jafari \cite{mJ06}.

\begin{thm}[Theorem 3.4]\label{thmJ}
 Let $G$ be a finite purely nonabelian $p$-group, p odd, then $\Autcent(G)$ is an elementary abelian $p$-group if and only if  the exponent of $Z(G)$ is $p$ or exponent of $G/\gamma_2(G)$ is $p$.
\end{thm}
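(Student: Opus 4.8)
The plan is to exploit the Adney--Yen description of central automorphisms together with the twisted multiplication that the assignment $\alpha\mapsto f_\alpha$ induces, and then to reduce the whole statement to a comparison of exponents. Recall that for $\alpha\in\Autcent(G)$ the map $f_\alpha\colon G\to\Z(G)$, $f_\alpha(x)=x^{-1}\alpha(x)$, is a homomorphism killing $\gamma_2(G)$, so $\alpha\mapsto f_\alpha$ is an injection of $\Autcent(G)$ into $\Hom(G/\gamma_2(G),\Z(G))$; since $G$ is purely non-abelian, Adney--Yen gives $|\Autcent(G)|=|\Hom(G/\gamma_2(G),\Z(G))|$, so this injection is in fact a bijection and every homomorphism is realized. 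A direct computation with $\alpha(x)=xf_\alpha(x)$ yields the twisted law $f_{\alpha\beta}(x)=f_\alpha(x)f_\beta(x)\,f_\alpha(f_\beta(x))$ and, by induction (writing $\Z(G)$ additively),
\[
f_{\alpha^{k}}=\Big(\sum_{i=0}^{k-1}A_\alpha^{\,i}\Big)\circ f_\alpha,\qquad A_\alpha=\Id_{\Z(G)}+g_\alpha,\quad g_\alpha=f_\alpha|_{\Z(G)} .
\]
The first reduction is purely arithmetic: for finite abelian $p$-groups one has $\exp\Hom(G/\gamma_2(G),\Z(G))=\min\big(\exp(G/\gamma_2(G)),\exp\Z(G)\big)$, and this minimum equals $p$ exactly when $\exp\Z(G)=p$ or $\exp(G/\gamma_2(G))=p$. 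Thus the right-hand hypothesis says precisely that $\Hom(G/\gamma_2(G),\Z(G))$ is elementary abelian, and as it has the same order as $\Autcent(G)$, the theorem becomes: $\Autcent(G)$ is elementary abelian if and only if $\Hom(G/\gamma_2(G),\Z(G))$ is.

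For the ``if'' direction I would first pin down the exponent. Using $p\mid\binom{p}{j}$ for $1\le j\le p-1$, the bracketed operator collapses to $\sum_{i=0}^{p-1}A_\alpha^{\,i}=g_\alpha^{\,p-1}+p\,h(g_\alpha)$ for an integer polynomial $h$; under either exponent hypothesis the term $p\,h(g_\alpha)f_\alpha$ vanishes (since $p$ annihilates $\Z(G)$ in the first case and $\operatorname{im}f_\alpha$ in the second), leaving $f_{\alpha^{p}}=g_\alpha^{\,p-1}\circ f_\alpha$. The key lemma is then that this vanishes, i.e. that the unipotent operator $A_\alpha=\alpha|_{\Z(G)}$ has no Jordan block of length $\ge p$ on the invariant subspace $\operatorname{im}f_\alpha$, equivalently $g_\alpha^{\,p-1}|_{\operatorname{im}f_\alpha}=0$. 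For commutativity I would run the same analysis on the twist: $\alpha$ and $\beta$ commute iff $f_\alpha(f_\beta(x))=f_\beta(f_\alpha(x))$ for all $x$, so it suffices to show the twist term is symmetric in $\alpha,\beta$. Both facts should follow from the interplay between $\operatorname{im}f_\alpha\subseteq\Z(G)$, the fact that $g_\alpha$ is the restriction of the homomorphism $f_\alpha$, and the purely non-abelian hypothesis (which, via Adney--Yen, makes every homomorphism realizable and thereby constrains how $\Z(G)$ sits inside $G/\gamma_2(G)$). \textbf{This is the step I expect to be the main obstacle}: bounding the length of the unipotent blocks of $\alpha|_{\Z(G)}$ is not a formal consequence of $\alpha$ having $p$-power order, and it is here that the genuine structure of purely non-abelian $p$-groups must enter.

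For the ``only if'' direction I would argue by contraposition: assume $\exp\Z(G)\ge p^{2}$ and $\exp(G/\gamma_2(G))\ge p^{2}$ and manufacture a central automorphism of order $\ge p^{2}$. The cleanest attempt is to choose a homomorphism $f$ with $g=f|_{\Z(G)}=0$, for then the power formula degenerates to $f_{\alpha^{k}}=kf$ and $\alpha$ has the same order as $f$ in $\Hom(G/\gamma_2(G),\Z(G))$; picking an element of $G/\gamma_2(G)$ of order $\ge p^{2}$ lying outside the image of $\Z(G)$ and sending it to an element of $\Z(G)$ of order $\ge p^{2}$ then forces order $p^{2}$. The subtle case is when every element of order $\ge p^{2}$ in $G/\gamma_2(G)$ already lies in the image of $\Z(G)$; there one must instead use a homomorphism with $g\ne0$ and read the order off the full power formula. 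In all cases realizability of the constructed map as an automorphism is automatic from Adney--Yen (equivalently $\Id+g$ is invertible on $\Z(G)$), so the only remaining work is the order computation in this residual case.
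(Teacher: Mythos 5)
First, a point of comparison: the paper itself does \emph{not} prove this statement --- it is Theorem~3.4 of Jafari \cite{mJ06}, imported verbatim as a prerequisite in Section~2 --- so your proposal has to stand on its own rather than be measured against an in-paper argument. Your scaffolding is correct: the Adney--Yen bijection $\alpha\mapsto f_\alpha$ onto $\Hom(G/\gamma_2(G),\Z(G))$, the twisted law $f_{\alpha\beta}=f_\alpha+f_\beta+f_\alpha\circ f_\beta$, the power formula, and the identity $\exp\Hom(G/\gamma_2(G),\Z(G))=\min\bigl(\exp(G/\gamma_2(G)),\exp\Z(G)\bigr)$ all check out. But the two steps that constitute the entire content of the ``if'' direction --- the vanishing of $g_\alpha^{\,p-1}\circ f_\alpha$ and the symmetry of the twist term --- are precisely the ones you leave unproved, and you say so yourself. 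As written this is a genuine gap, not a deferred verification: nothing in your text explains why $\alpha|_{\Z(G)}$ cannot have a long unipotent block on $\operatorname{im}f_\alpha$, nor why $f_\alpha\circ f_\beta$ should be symmetric.

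The missing idea is a single elementary lemma that makes the Jordan-block discussion unnecessary: \emph{in a purely non-abelian $p$-group every central element of order $p$ lies in $\Phi(G)$}. (If $z\in\Z(G)$ has order $p$ and $z\notin\Phi(G)$, choose a maximal subgroup $M$ with $z\notin M$; then $G=M\gen{z}$ with $M\cap\gen{z}=1$ and $z$ central, so $G=M\times\gen{z}$, contradicting pure non-abelianness.) Under either exponent hypothesis this forces $f_\alpha\circ f_\beta=0$ identically: if $\exp(G/\gamma_2(G))=p$ then $\Phi(G)=\gamma_2(G)$ and $\operatorname{im}f_\beta$, being a quotient of $G/\gamma_2(G)$, consists of central elements of order dividing $p$, hence lies in $\Phi(G)=\gamma_2(G)\le\ker f_\alpha$; if $\exp\Z(G)=p$ then $f_\alpha$ kills $G^p$ as well as $\gamma_2(G)$, hence kills $\Phi(G)$, while $\operatorname{im}f_\beta\le\Z(G)\le\Phi(G)$ by the lemma. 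Consequently $f_{\alpha\beta}=f_\alpha+f_\beta$, so $\alpha\mapsto f_\alpha$ is a group isomorphism of $\Autcent(G)$ onto $\Hom(G/\gamma_2(G),\Z(G))$, and both the exponent bound and commutativity fall out at once; this, rather than any constraint on block lengths, is where the purely non-abelian hypothesis enters. Your ``only if'' direction is likewise unfinished: in the ``subtle case'' you flag, the power formula $f_{\alpha^p}=pf+\binom{p}{2}f^2+\dots$ really can exhibit cancellation, so a careful explicit choice of $f$ (for instance one with $f^3=0$, which kills all correction terms modulo $p^2$ for odd $p$) is still required. That half is routine once written down; the first half is where the real missing idea lies.
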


A finite abelian $p$-group $G$ is said to be \emph{ce-group} if $G$ can be written as a direct product of a cyclic group $A$ of order $p^n$, $n > 1$ and an elementary abelian $p$-group $B$.

\begin{thm}[Theorem 3.5]\label{thmJ1}
Let G be a purely nonabelian $2$-group. Then $\Autcent(G)$ is elementary
abelian if and only if one of the following conditions holds:
\begin{enumerate}
\item the exponent of $G/\gamma_2(G)$ is $2$;
\item the exponent of $Z(G)$ is $2$;
\item the greatest common divisor of the exponents of $G/\gamma_2(G)$ and $Z(G)$ is $4$ and $G/\gamma_2(G)$, $Z(G)$ are ce-groups having the properties that an elementary part of $Z(G)$ is contained in $\gamma_2(G)$ and there exists an
element $z$ of order $4$ in a cyclic part of $Z(G)$ with $z\gamma_2(G)$ lying in a cyclic part of $G/\gamma_2(G)$
such that twice of the order of $z\gamma_2(G)$ is equal to the exponent of $G/\gamma_2(G)$.
\end{enumerate}
\end{thm}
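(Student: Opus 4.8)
The plan is to transport the problem, via the classical bijective correspondence of Adney and Yen, from $\Autcent(G)$ to the abelian group of central-valued homomorphisms, and then to read off the elementary abelian condition from the structure of two abelian $2$-groups together with a coupling map between them. Since $G$ is purely nonabelian, every central automorphism is of the form $\alpha_f\colon x \mapsto x\,f(x\gamma_2(G))$ for a unique $f \in N := \Hom(G/\gamma_2(G),\Z(G))$, and $f \mapsto \alpha_f$ is a bijection. First I would compute the group law that composition induces on $N$: a short calculation with $\alpha_f\alpha_g$ gives $\alpha_f\alpha_g = \alpha_{f*g}$, where $f*g = fg\cdot(f\circ\pi\circ g)$, the products being formed pointwise in $\Z(G)$ and $\pi\colon \Z(G)\to G/\gamma_2(G)$ being the canonical map $z\mapsto z\gamma_2(G)$. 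Thus the failure of $N$ to be abelian under $*$ is controlled entirely by the ``twist'' $f\circ\pi\circ g$, which sees $\pi$ only through its image $\br{Z}:=\Z(G)\gamma_2(G)/\gamma_2(G)$.

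Setting $A:=G/\gamma_2(G)$ and $Z:=\Z(G)$, I would translate ``$\Autcent(G)$ elementary abelian'' into the pair of pointwise identities $f\circ\pi\circ g = g\circ\pi\circ f$ (commutativity) and $f(x)^2\,f\bigl(\pi(f(x))\bigr)=1$ for all $f,g\in N$ and all $x\in A$ (every element of order dividing $2$). The structural fact I would lean on throughout is that, for a purely nonabelian $p$-group, $\Z(G)\le\Phi(G)=\gamma_2(G)G^2$, so that $\br{Z}$ is contained in the subgroup $A^2$ of squares of $A$; hence the endomorphism $\pi\circ f$ of $A$ always has image in $A^2$. With this in hand the two clean conditions drop out: if $\exp(A)=2$ then $A^2=1$, so the twist vanishes and $f(x)^2=1$ automatically; if $\exp(Z)=2$ then every $f$ kills $A^2$, so again $f(\pi(f(x)))=1$ and the twist vanishes. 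In either case both identities hold and $\Autcent(G)$ is elementary abelian, recovering conditions (1) and (2); the necessity of one of the exponent conditions in the ``generic'' range is argued, exactly as in the odd-prime case of Theorem \ref{thmJ}, by pairing a cyclic factor of order $\ge 4$ in $A$ with one in $Z$ to manufacture an element of order $4$ in $(N,*)$.

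The heart of the matter, and the step I expect to be the main obstacle, is the borderline case $\gcd(\exp A,\exp Z)=4$ with neither exponent equal to $2$, which is where the phenomenon peculiar to $p=2$ produces condition (3). Here the order-two identity reads $y^2 = f\bigl(\pi(y)\bigr)^{-1}$ with $y=f(x)$ running over the image of $f$, so an element $y\in Z$ of order $4$ forces its unique involution $y^2$ to be prescribed as the value $f(\pi(y))$ for every admissible $f$ simultaneously. Showing that this rigid system of constraints is consistent only under the ce-group description is the crux: I would prove that it compels $A$ and $Z$ to be ce-groups, that the elementary part of $Z$ must be absorbed into $\gamma_2(G)$ (so that $\pi$ annihilates it and its involutions cannot spoil the identities), and that there must be an order-$4$ element $z$ in a cyclic part of $Z$ whose image $z\gamma_2(G)$ lies in a cyclic part of $A$ with $2\cdot\operatorname{ord}(z\gamma_2(G))=\exp(A)$ --- precisely the numerical matching that lets the prescribed involutions be realised without creating an element of order $4$ under $*$. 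The converse, that these ce-group hypotheses force both pointwise identities, I would check directly by computing $*$ on the distinguished cyclic generators. The delicate part is the bookkeeping of which cyclic factors of $A$ and $Z$ are linked by $\pi$ and the verification that the element $z$ can always be located inside the cyclic parts; this is where essentially all the difficulty of the $2$-group case resides.
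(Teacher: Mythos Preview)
The paper does not prove this statement: Theorem~\ref{thmJ1} is quoted verbatim from Jafari's paper \cite{mJ06} (as Theorem~3.5 there) and is listed among the prerequisites in Section~2 with no argument given. So there is no ``paper's own proof'' against which to compare your proposal; any comparison would have to be with Jafari's original article, not with the present paper.

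That said, your outline contains a substantive error that would derail the argument even in the easy cases. You assert that for a purely nonabelian $p$-group one always has $\Z(G)\le\Phi(G)$, and you use this to conclude that $\pi\circ f$ lands in $A^2$. This inclusion is false in general: the very group $G_1$ of Theorem~B in this paper is purely nonabelian with $\Phi(G_1)=\gamma_2(G_1)\lneq\Z(G_1)$. Without $\br Z\subseteq A^2$ your handling of condition~(2) breaks down: when $\exp(Z)=2$ you still need $f(\pi(f(x)))=1$ for every $f$, and that does not follow merely from $f(x)^2=1$. The actual mechanism is different---one must use that any $f\in\Hom(A,Z)$ with $\exp(Z)=2$ factors through $A/A^2$, together with a separate argument placing $\pi(\operatorname{im}f)$ inside $A^2$ (or, as in Jafari's approach, an explicit coordinate analysis of $\Hom(A,Z)$). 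The borderline ce-group case (3) is, as you anticipate, the delicate one, but before tackling it you should repair the foundational claim about $\Z(G)$ and $\Phi(G)$.
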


The next result is by B. Earnley \cite[Corollary 3.3]{bE75}.

\begin{thm}\label{thmE}
 Let $G$ be a non-abelian finite $p$-group of exponent $p$, where $p$ is an odd prime. Then $\Aut(G)$ is non-abelian.
\end{thm}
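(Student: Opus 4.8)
The plan is to argue by contradiction and to reduce the whole statement to the nilpotency class $2$ case. So suppose $\Aut(G)$ is abelian. Since $\Inn(G)$ is a subgroup of $\Aut(G)$ and $\Inn(G) \cong G/\Z(G)$, the quotient $G/\Z(G)$ is abelian; hence $\gamma_2(G) \le \Z(G)$, and as $G$ is non-abelian, $G$ has nilpotency class exactly $2$. This is the crucial reduction: for a group of class $\ge 3$ the inner automorphism group $\Inn(G) \cong G/\Z(G)$ is already non-abelian, so such groups are disposed of immediately, and the real content of the theorem lies in the class-$2$ case, where $\Inn(G)$ is abelian and one must instead produce non-commuting \emph{outer} automorphisms.

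Next I would exhibit a non-central automorphism scaling the two layers of $G$ by different powers of a scalar. Write $V = G/\gamma_2(G)$ and $W = \gamma_2(G)$, both elementary abelian since $\exp(G)=p$, and recall that for class $2$ and odd $p$ the group $G$ is encoded by its surjective alternating bilinear commutator map $\beta \colon V \times V \to W$; here the hypothesis $p$ odd enters through $\binom{p}{2}\equiv 0 \pmod p$, which guarantees that the $p$-th power map imposes no extra constraint beyond $\exp(G)=p$. For each $\lambda \in \GF(p)^{\ast}$ the pair $(\lambda\,\Id_V,\ \lambda^2\,\Id_W)$ respects $\beta$, because $\beta(\lambda v_1,\lambda v_2)=\lambda^2\beta(v_1,v_2)$. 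The rigorous way to turn this into a genuine automorphism is the Lazard correspondence, available precisely because the class is $2 < p$: it identifies $\Aut(G)$ with $\Aut(L)$, where $L = V \oplus W$ is the associated $\GF(p)$-Lie algebra (graded, with $[V,V]\subseteq W$ and $W$ central), on which the grading map $\delta_\lambda$ scaling the degree-$i$ part by $\lambda^{i}$ is visibly a Lie automorphism. Either way this yields an automorphism $\delta_\lambda \in \Aut(G)$ inducing multiplication by $\lambda$ on $V$ and by $\lambda^{2}$ on $W$.

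Finally I would fix $\lambda \ne 1$, which is possible since $p$ is odd and hence $|\GF(p)^{\ast}| = p-1 \ge 2$, and pick any $x \in G$ with $x \notin \Z(G)$, which exists because $G$ is non-abelian. As $\delta_\lambda$ acts as $\lambda$ on $V$, we have $\delta_\lambda(x) = x^{\lambda}c$ for some $c \in \gamma_2(G) \le \Z(G)$, so $\delta_\lambda(x)\,x^{-1} = x^{\lambda-1}c$. Since $\lambda - 1 \not\equiv 0 \pmod p$ and $\exp(G)=p$, the element $x^{\lambda-1}$ generates $\gen{x}$ and therefore lies in $\Z(G)$ only if $x$ does; hence $\delta_\lambda(x)\,x^{-1} \notin \Z(G)$, i.e. $\delta_\lambda$ is not a central automorphism. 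Writing $\iota_x$ for the inner automorphism $g \mapsto xgx^{-1}$, this gives $\delta_\lambda\,\iota_x\,\delta_\lambda^{-1} = \iota_{\delta_\lambda(x)} \ne \iota_x$, so $\delta_\lambda$ and $\iota_x$ are two non-commuting elements of $\Aut(G)$, contradicting the assumption. Thus $\Aut(G)$ is non-abelian.

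I expect the main obstacle to be the honest construction of $\delta_\lambda$ as an automorphism of $G$ itself, rather than merely of the linear data $(V,W,\beta)$; the Lazard correspondence is the quickest rigorous justification, and it is exactly the reduction to class $2 < p$ in the first paragraph that makes it applicable. As an alternative to the third paragraph one may instead invoke Theorem~\ref{thmJ}: the assumption $\Aut(G)$ abelian forces $G$ purely non-abelian (Hopkins \cite{cH27}) and $\Aut(G)=\Autcent(G)$, and since $\exp(\Z(G)) = p$ that theorem makes $\Aut(G)$ elementary abelian, so that the single automorphism $\delta_\lambda$, which has order $p-1 > 1$, already contradicts $\Aut(G)$ being a $p$-group.
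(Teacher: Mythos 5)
The paper does not prove this statement at all: it is quoted verbatim from B.~Earnley's thesis (\cite[Corollary 3.3]{bE75}) and used as a black box, so there is no internal proof to compare against. Your argument is a correct, self-contained proof. The reduction to class $2$ via $\Inn(G)\cong G/\Z(G)$ is right, and the construction of the grading automorphism $\delta_\lambda$ is the genuine content; you correctly identify that the only delicate point is lifting the compatible pair $(\lambda\Id_V,\lambda^2\Id_W)$ to an automorphism of $G$ itself, and the Baer--Lazard correspondence (applicable since the class is $2<p$ and $\exp(G)=p$ makes the associated Lie ring a $\GF(p)$-algebra with $L=V'\oplus W$ for any linear complement $V'$ of $\gamma_2(G)$) does settle it. The final commutator computation $\delta_\lambda\iota_x\delta_\lambda^{-1}=\iota_{\delta_\lambda(x)}\neq\iota_x$ is sound because $\delta_\lambda(x)x^{-1}=x^{\lambda-1}c$ with $c$ central and $\gcd(\lambda-1,p)=1$. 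Two trivial quibbles: the order of $\delta_\lambda$ is the multiplicative order of $\lambda$ in $\GF(p)^{*}$, which equals $p-1$ only when $\lambda$ is a primitive root, though any order $>1$ coprime to $p$ already gives the contradiction in your alternative ending; and the alternative ending, unlike the main one, genuinely needs Hopkins' purely-non-abelian result to invoke Theorem~\ref{thmJ}, so the first ending is the cleaner and more economical of the two.
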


The following two results are well known.

\begin{lemma}\label{lemma1a}
 Let $A$, $B$ and $C$ be finite abelian groups. Then $\Hom(A \times B, \;C) \cong \Hom(A, \; C) \times \Hom(B, \;C)$ and $\Hom(A, \;B \times C) \cong \Hom(A, \; B) \times \Hom(A, \;C)$.
\end{lemma}

\begin{lemma}\label{lemma1b}
 Let $\C_r$ and $\C_s$ be two cyclic groups of order $r$ and $s$ respectively. Then $\Hom(\C_r, $ $\C_s) \cong \C_d$, where $d$ is the greatest common divisor of $r$ and $s$.
\end{lemma}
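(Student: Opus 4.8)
The plan is to exploit the fact that a homomorphism out of a cyclic group is completely determined by the image of a generator, and then to read off both the number and the structure of the admissible images from a single divisibility condition. Write $\C_r = \gen{a}$ and $\C_s = \gen{b}$, and consider the evaluation map $\Phi \colon \Hom(\C_r, \C_s) \to \C_s$ given by $\Phi(\phi) = \phi(a)$. Since $\Hom(\C_r, \C_s)$ carries the pointwise product and $\C_s$ is abelian, $\Phi$ is a group homomorphism; it is injective because any $\phi$ is recovered from $\phi(a)$ via $\phi(a^i) = \phi(a)^i$. Thus $\Hom(\C_r, \C_s)$ is isomorphic to $\operatorname{Im}\Phi$, and it remains only to identify this image as a subgroup of the cyclic group $\C_s$.

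Next I would characterize which elements of $\C_s$ actually occur as $\phi(a)$. An element $b^k$ arises precisely when the assignment $a \mapsto b^k$ respects the defining relation $a^r = 1$, that is, when $b^{kr} = 1$, equivalently $s \mid kr$. The crux of the argument is the number-theoretic reduction of this condition: setting $d = \gcd(r,s)$, we have $s \mid kr$ if and only if $(s/d) \mid k(r/d)$, and since $\gcd(r/d,\, s/d) = 1$ this holds if and only if $(s/d) \mid k$. Hence the admissible exponents $k$ modulo $s$ are exactly the multiples of $s/d$, so $\operatorname{Im}\Phi = \gen{b^{s/d}}$, which is the unique cyclic subgroup of $\C_s$ of order $d$.

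Finally I would combine the two steps to conclude $\Hom(\C_r, \C_s) \cong \operatorname{Im}\Phi = \gen{b^{s/d}} \cong \C_d$. There is no genuine obstacle in this argument; the only step requiring any care is the coprimality reduction that converts $s \mid kr$ into $(s/d) \mid k$, since this is exactly the point at which the greatest common divisor enters and pins the image down to a subgroup of order $d$ rather than a larger solution set.
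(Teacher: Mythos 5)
Your proof is correct. The paper offers no proof of this lemma at all --- it is simply cited as ``well known'' --- so there is nothing to compare against; your argument (a homomorphism out of $\C_r = \gen{a}$ is determined by $\phi(a)$, the admissible images are exactly the $b^k$ with $s \mid kr$, and the coprimality reduction $s \mid kr \Leftrightarrow (s/d) \mid k$ identifies the image of the evaluation map as the unique subgroup $\gen{b^{s/d}}$ of order $d$) is the standard one and is complete, including the one point that genuinely needs care, namely that $a \mapsto b^k$ extends to a homomorphism precisely when the relation $a^r=1$ is respected.
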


The following result follows from \cite[Theorem 1]{AY65}.

\begin{prop}\label{prop1}
 Let $G$ be a purely non-abelian finite $p$-group. Then $|\Autcent(G)| = |\Hom(G$ $/\gamma_2(G), \Z(G))|$.
\end{prop}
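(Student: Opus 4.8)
The plan is to realize $\Autcent(G)$ and $\Hom(G/\gamma_2(G),\Z(G))$ as two descriptions of the same data, namely the ``displacement map'' $x \mapsto x^{-1}\alpha(x)$ attached to a central automorphism $\alpha$, and then to check that this correspondence is a bijection. Since the proposition asserts only an equality of orders, it suffices to produce a bijection; I will not need the correspondence to respect the two group structures.

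First I would fix $\alpha \in \Autcent(G)$ and define $f_\alpha \colon G \to \Z(G)$ by $f_\alpha(x) = x^{-1}\alpha(x)$; this lands in $\Z(G)$ precisely because $\alpha$ is central. A one-line computation shows $f_\alpha$ is a homomorphism: one has $f_\alpha(xy) = y^{-1}f_\alpha(x)\alpha(y)$, and using that $f_\alpha(x) \in \Z(G)$ is central gives $f_\alpha(xy) = f_\alpha(x)f_\alpha(y)$. Because the target $\Z(G)$ is abelian, $f_\alpha$ kills $\gamma_2(G)$ and therefore descends to an element of $\Hom(G/\gamma_2(G),\Z(G))$. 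This produces a well-defined map $\alpha \mapsto f_\alpha$ from $\Autcent(G)$ to $\Hom(G/\gamma_2(G),\Z(G))$, and it is injective because $\alpha$ is recovered from $f_\alpha$ through $\alpha(x) = x\,f_\alpha(x)$.

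For surjectivity I would start from an arbitrary $f \in \Hom(G/\gamma_2(G),\Z(G))$, lift it to $\br{f} \colon G \to \Z(G)$, and set $\phi(x) = x\,\br{f}(x)$. Running the same computation in reverse, again using that $\br{f}(x)$ is central, shows $\phi$ is an endomorphism of $G$ satisfying $x^{-1}\phi(x) \in \Z(G)$; thus $\phi$ is a central endomorphism whose associated displacement map is exactly $f$. The only remaining point is that $\phi$ is in fact an \emph{automorphism}, and this is precisely where the hypothesis that $G$ is purely non-abelian becomes indispensable: it is the substance of Adney and Yen's result \cite[Theorem 1]{AY65}.

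I expect this last step to be the main obstacle. Since $G$ is finite it is enough to verify that $\phi$ is injective, and a first reduction is clean: if $x \in \Ker \phi$ then $x = \br{f}(x)^{-1} \in \Z(G)$, so $\Ker \phi \le \Z(G)$. The heart of the matter is to rule out a nontrivial such kernel, the idea being that a nontrivial kernel lying in the centre would split off an abelian direct factor of $G$, contradicting pure non-abelianness. Establishing this implication carefully, equivalently that every central endomorphism of a purely non-abelian $p$-group of the above form is invertible, is the technical core carried out in \cite{AY65}; granting it, the map $\alpha \mapsto f_\alpha$ is a bijection and hence $|\Autcent(G)| = |\Hom(G/\gamma_2(G),\Z(G))|$.
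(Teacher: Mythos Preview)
Your argument is correct and is precisely the standard proof of Adney--Yen's Theorem~1; the paper itself gives no proof at all, merely stating that the proposition ``follows from \cite[Theorem 1]{AY65}.'' You have simply unpacked that citation, correctly identifying the displacement map $\alpha\mapsto f_\alpha$ as the bijection and deferring the one genuinely nontrivial step (that every central endomorphism $x\mapsto x\br f(x)$ of a purely non-abelian $p$-group is automatically bijective) to the same reference the paper invokes.
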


Let $A$ be an abelian $p$-group and $a \in A$. For a positive integer $n$, $p^n$  is said to be the \emph{height} of $a$ in $A$, denoted by $\het(a)$, if $a \in A^{p^n}$ but
$a \not\in  A^{p^{n+1}}$. Let $H$ be a  $p$-group of class $2$. We denote the exponents of $\Z(H)$, $\gamma_2(H)$, $H/\gamma_2(H)$ by $p^a$, $p^b$, $p^c$ respectively and $d = \text{min}(a, c)$. We define $R := \{z \in \Z(H) \mid \; |z| \le p^d\}$ and $K := \{x \in H \mid \het(x\gamma_2(H)) \ge p^b\}$. Notice that $K = H^{p^b}\gamma_2(H)$. 
Now we state the following important result of J. E. Adney and T. Yen \cite[Theorem 4]{AY65} (in our notations).

\begin{thm}\label{thmAY}
Let $H$ be a  purely non-abelian $p$-group of class $2$, $p$ odd, and let $H/\gamma_2(H) = \Pi_{i = 1}^n\gen{x_i\gamma_2(H)}$. Then $\Autcent(H)$ is abelian if and only if

\text{(i)} $R = K$, and 

\text{(ii)} either $d = b$ or $d > b$ and $R/\gamma_2(H) = \gen{x_1^{p^b}\gamma_2(H)}$.
\end{thm}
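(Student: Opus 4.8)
The plan is to transport the whole question to the abelian group $\Hom(H/\gamma_2(H), \Z(H))$ and reduce the commutativity of $\Autcent(H)$ to a single identity between homomorphisms. Write $\br{G} = H/\gamma_2(H)$ and $Z = \Z(H)$, and let $\iota : Z \to \br{G}$ be the natural map $z \mapsto z\gamma_2(H)$; since $H$ has class $2$ we have $\gamma_2(H) \le Z$, so $\iota$ has kernel exactly $\gamma_2(H)$. For a central automorphism $\alpha$ I set $f_\alpha(x\gamma_2(H)) = x^{-1}\alpha(x)$, which is a well-defined element of $\Hom(\br{G}, Z)$. The assignment $\alpha \mapsto f_\alpha$ is injective (if $f_\alpha = f_\beta$ then $\alpha$ and $\beta$ agree on all of $H$), and by Proposition \ref{prop1}, since $H$ is purely non-abelian, $|\Autcent(H)| = |\Hom(\br{G}, Z)|$; hence $\alpha \mapsto f_\alpha$ is a bijection, and every $f$ arises from a unique central automorphism $\alpha_f$ with $\alpha_f(x) = x\, f(\br{x})$.

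Next I would compute the composition law under this bijection. Writing $Z$ additively, a direct calculation using $\alpha_f(x) = x\,f(\br{x})$ and the fact that the values of $f,g$ are central gives $\alpha_f \alpha_g = \alpha_{\,f + g + f\iota g}$, where $f\iota g$ denotes the composite $\br{G} \xrightarrow{g} Z \xrightarrow{\iota} \br{G} \xrightarrow{f} Z$. Consequently $\alpha_f$ and $\alpha_g$ commute if and only if $f\iota g = g\iota f$, and therefore
\[
\Autcent(H) \text{ is abelian} \iff f\iota g = g\iota f \text{ for all } f,g \in \Hom(\br{G}, Z).
\]
This reformulation is the backbone of the argument; everything afterward is an analysis of this one identity.

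I would then decode the identity using cyclic decompositions. Fix the given $\br{G} = \prod_{i=1}^n \gen{x_i\gamma_2(H)}$ with $|x_i\gamma_2(H)| = p^{c_i}$ (so $c_1 = c$) together with a decomposition $Z = \bigoplus_j \gen{z_j}$, record $\iota$ by the matrix $\Lambda$ expressing each $\iota(z_j)$ in the $x_i\gamma_2(H)$, and record $f,g$ by matrices $\Phi,\Psi$ subject to the divisibility constraints coming from $\Hom$ of cyclic groups (Lemma \ref{lemma1b}). In these coordinates the condition becomes the twisted commutativity $\Phi\Lambda\Psi = \Psi\Lambda\Phi$ for all admissible $\Phi,\Psi$. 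Two observations convert this into the stated structural conditions. First, the subgroup of $Z$ swept out by all images of homomorphisms in $\Hom(\br{G}, Z)$ is exactly $R = \{z : |z| \le p^d\}$, because the largest cyclic factor of $\br{G}$, of order $p^{c} $, can be sent to an element of order up to $p^{\min(c,a)} = p^{d}$, and every element of that order is so realized. Second, $\iota$ feeds these images (which live in $R$) back into $\br{G}$, and since $p^b = \exp\gamma_2(H)$ governs which heights survive in $\br{G}$, the relevant target is $K/\gamma_2(H) = \br{G}^{\,p^b}$, i.e.\ the height-$\ge p^b$ part recorded by $K = H^{p^b}\gamma_2(H)$. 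Matching the twisted-commutativity identity against these data should force, on one hand, that the image lattice $R$ coincides with $K$ (so in particular $K \subseteq Z$), giving condition (i); and, on the other hand, the dichotomy in (ii), separating the case $d = b$ from the case $d > b$, in which a single cyclic part of $\Z(H)$ sitting inside a cyclic part of $\br{G}$ must carry all of $R/\gamma_2(H)$, namely $\gen{x_1^{p^b}\gamma_2(H)}$.

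For the converse I would verify that conditions (i) and (ii) make $\Phi\Lambda$ and $\Psi\Lambda$ simultaneously ``diagonal enough'' that $\Phi\Lambda\Psi = \Psi\Lambda\Phi$ holds identically for all admissible $\Phi,\Psi$, completing the equivalence. The \textbf{main obstacle} I anticipate is precisely this decoding step: the divisibility constraints on the entries of $\Phi,\Psi$ (heights in $Z$) interact with the height constraints already built into $\Lambda$ (heights in $\br{G}$), and extracting the clean statements $R = K$ and (ii) from ``$\Phi\Lambda\Psi = \Psi\Lambda\Phi$ for all admissible $\Phi,\Psi$'' demands a careful choice of generators $x_i$ and $z_j$ and meticulous bookkeeping of orders and heights, rather than any single trick. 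By contrast, the reduction in the first two paragraphs is routine; the arithmetic of heights in the third paragraph is where the real work lies.
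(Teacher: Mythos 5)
First, a point of reference: the paper does not prove this statement. Theorem \ref{thmAY} is quoted (in the paper's notation) from Adney and Yen \cite[Theorem 4]{AY65}, so there is no in-paper proof to compare against; the relevant comparison is with the original argument of \cite{AY65}. Your first two paragraphs faithfully reconstruct the engine of that argument: for purely non-abelian $H$ the map $\alpha\mapsto f_\alpha$ is a bijection from $\Autcent(H)$ onto $\Hom(H/\gamma_2(H),\Z(H))$ (injectivity together with the cardinality count of Proposition \ref{prop1}), the composition law is $\alpha_f\alpha_g=\alpha_{f+g+f\iota g}$ with $\iota\colon \Z(H)\to H/\gamma_2(H)$ the natural map, and hence $\Autcent(H)$ is abelian if and only if $f\iota g=g\iota f$ for all $f,g$. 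All of this is correct, and the observation that the union of the images of all such $f$ is exactly $R$ is also correct.

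The genuine gap is that the proof stops there. Your third and fourth paragraphs do not derive conditions (i) and (ii) from the identity $f\iota g=g\iota f$; they name the relevant objects ($R$, $K$, the divisibility constraints on $\Phi,\Psi,\Lambda$) and then assert that the identity ``should force'' (i) and the dichotomy in (ii), which is a statement of the goal rather than an argument. The forward direction requires, whenever $R\neq K$ or the cyclicity condition in (ii) fails, an explicit pair $f,g$ with $f\iota g\neq g\iota f$ --- typically $f$ supported on a single cyclic factor of $H/\gamma_2(H)$ and $g$ chosen to detect the offending height --- and no such construction is exhibited; the converse (``$\Phi\Lambda$ and $\Psi\Lambda$ are simultaneously diagonal enough'') is likewise announced but not verified. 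You also leave implicit the preliminary fact needed even to read $R=K$ as a comparison of subgroups of $\Z(H)$, namely that $K=H^{p^b}\gamma_2(H)\le\Z(H)$; this follows for class $2$ from $\exp(H/\Z(H))=\exp(\gamma_2(H))$, but it should be said. In short, the framework is the right one --- it is Adney--Yen's own --- but the substantive half of the equivalence, which you yourself flag as ``where the real work lies,'' is missing.
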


Let $G$ be a finite $p$-group of nilpotency class $2$ generated by $x_1, x_2, \dots, x_d$, where $d$ is a positive integer.
Let
$e_{x_i} = x_1^{a_{i1}} x_2^{a_{i2}} \cdots x_d^{a_{id}} = $  {\tiny$\prod$}$_{j=1}^d x_j^{a_{ij}}$, where $x_i \in G$ and $a_{ij}$ are non-negative integers for
 $1 \le i, j \le d$. Since the nilpotency class of $G$ is $2$, we have  
\begin{equation}\label{0a}
[x_k, e_{x_i}]  =  [x_k,  \text{\tiny$\prod$}_{j=1}^d x_j^{a_{ij}}] = \text{\tiny$\prod$}_{j=1}^d [x_k, x_j^{a_{ij}}] = \text{\tiny$\prod$}_{j=1}^d [x_k, x_j]^{a_{ij}}
\end{equation}
and 
\begin{eqnarray}\label{0b}
 [e_{x_k}, e_{x_i}]  &=&  [\text{\tiny$\prod$}_{l=1}^d x_l^{a_{kl}},  \text{\tiny$\prod$}_{j=1}^d x_j^{a_{ij}}] = \text{\tiny$\prod$}_{j=1}^d \text{\tiny$\prod$}_{l=1}^d [x_l^{a_{kl}}, x_j^{a_{ij}}]\\
& = & \text{\tiny$\prod$}_{j=1}^d \text{\tiny$\prod$}_{l=1}^d [x_l, x_j]^{a_{kl} a_{ij}}.\nonumber
\end{eqnarray}

Equations \eqref{0a} and \eqref{0b} will be used for our calculations without any further reference.

\section{Groups $G$ with $\Aut(G)$ abelian}

In this section we construct a class of finite $p$-groups such that  $\gamma_2(G) \le \Z(G) < \Phi(G)$ and $\Aut(G)$ is abelian.
Let $n \ge 4$ be a positive integer and $p$ be an odd prime. Consider the following group:

\begin{eqnarray}\label{group1}
G &=& \Big\langle x_1,\; x_2, \;x_3, \;x_4  \mid x_1^{p^n} = x_2^{p^4} = x_3^{p^4} = x_4^{p^2} = 1, \; [x_1, x_2] = x_2^{p^2},\\
& &  [x_1, x_3] = x_2^{p^2},\;  [x_1, x_4] = x_3^{p^2},\; [x_2, x_3] = x_1^{p^{n-2}},\; [x_2, x_4] = x_3^{p^2},\nonumber\\
& & [x_3, x_4] = x_2^{p^2} \Big\rangle. \nonumber
\end{eqnarray}

Throughout this section, $G$ always denotes the group given in \eqref{group1}. It is easy to see that $G$ enjoys the properties given in the following lemma.
\begin{lemma}\label{lemma1}
 The group $G$ is a regular $p$-group of nilpotency class $2$ having order $p^{n+10}$ and exponent $p^n$. For $n = 4$, $\gamma_2(G) = \Z(G) < \Phi(G)$ and  for $n \ge 5$, $\gamma_2(G) < \Z(G) < \Phi(G)$.  
\end{lemma}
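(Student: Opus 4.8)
The plan is to verify each asserted property of $G$ by direct computation from the defining presentation in \eqref{group1}, treating the claims in the natural logical order: first nilpotency class $2$, then regularity, then the order, then the exponent, and finally the chain of inclusions among $\gamma_2(G)$, $\Z(G)$ and $\Phi(G)$ in the two cases $n=4$ and $n\ge 5$. The key observation driving everything is that all six defining commutators $[x_i,x_j]$ lie in the subgroup generated by $x_1^{p^{n-2}}$, $x_2^{p^2}$, $x_3^{p^2}$, which are easily seen to be central. Hence $\gamma_2(G)$ is central, giving nilpotency class (at most, and in fact exactly) $2$ immediately.

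First I would pin down $\gamma_2(G)$. Since class is $2$, $\gamma_2(G)$ is generated by the six listed commutators, so $\gamma_2(G) = \gen{x_1^{p^{n-2}},\, x_2^{p^2},\, x_3^{p^2}}$. Each generator is central, so I must compute the orders of these three elements: $x_1^{p^{n-2}}$ has order $p^2$ (as $|x_1|=p^n$), while $x_2^{p^2}$ and $x_3^{p^2}$ each have order $p^2$ (as $|x_2|=|x_3|=p^4$). I expect these three cyclic central subgroups to intersect trivially, so $\gamma_2(G)\cong \C_{p^2}\times\C_{p^2}\times\C_{p^2}$ of order $p^6$. \emph{This triviality-of-intersection claim is the point I would check most carefully}, as it underlies both the order count and the subgroup chain. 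For the order of $G$, I would argue that modulo $\gamma_2(G)$ the generators $x_1,x_2,x_3,x_4$ have orders $p^{n-2}, p^2, p^2, p^2$ respectively (their $p$-power parts falling into $\gamma_2(G)$), giving $|G/\gamma_2(G)| = p^{n+4}$ and hence $|G| = p^{n+4}\cdot p^6 = p^{n+10}$. The exponent being $p^n$ follows since $x_1$ has order $p^n$ and, by regularity, no product can exceed this.

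For regularity, since $p$ is odd and the class is $2 < p$ for $p\ge 3$ once we note $p^{n-2}\ge 2$ — more simply, a $p$-group of class less than $p$ is regular, and for odd $p$ class $2$ suffices — I would invoke this standard criterion rather than verify the Hall--Petrescu identities by hand. Regularity is what legitimizes the clean exponent and power-map computations above.

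Finally, for the subgroup chain I would compute $\Z(G)$ and $\Phi(G)$. Since $\Phi(G) = G^p\gamma_2(G)$, and $G^p$ contains $x_1^p$ (of order $p^{n-1}$), $\Phi(G)$ properly contains $\gamma_2(G)$ in all cases because $x_1^p\notin\gamma_2(G)$; so $\Phi(G)$ strictly exceeds $\Z(G)$ whenever $\Z(G)$ does not already contain $x_1^p$. For the center, an element is central iff it commutes with all four generators; solving these commutator conditions against the relations, I would identify $\Z(G)$ as $\gen{x_1^{p^{n-4}}, x_2^{p^2}, x_3^{p^2}}$ (or the appropriate power of $x_1$), whose order grows with $n$. \emph{The crux distinguishing the two cases} is comparing the contribution of $x_1$ to $\gamma_2(G)$ (namely $x_1^{p^{n-2}}$) versus to $\Z(G)$: for $n=4$ these coincide, forcing $\gamma_2(G)=\Z(G)$, whereas for $n\ge 5$ the center picks up lower powers of $x_1$, making $\gamma_2(G)$ a proper subgroup of $\Z(G)$. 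I expect this center computation — correctly tracking which powers of $x_1$ become central once the $x_i^{p^2}$ are quotiented — to be the main obstacle, and the rest to be routine order bookkeeping justified by regularity.
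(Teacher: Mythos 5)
Your overall plan---direct computation of $\gamma_2(G)$, $\Z(G)$ and $\Phi(G)$ from the presentation, with regularity free from class $2<p$---is exactly what the paper intends (it offers no proof beyond ``it is easy to see''; the needed data reappear in the proof of Theorem A, where the authors record $\Z(G)=G^{p^2}\gamma_2(G)$ with exponent $p^{n-2}$). However, your tentative identification of the center is wrong. Writing a general element as $x_1^{a}x_2^{b}x_3^{c}x_4^{d}$ modulo $\gamma_2(G)$ and imposing $[\,\cdot\,,x_j]=1$ for $j=1,\dots,4$ forces $a\equiv b\equiv c\equiv d\equiv 0 \pmod{p^2}$ (e.g.\ $[x_1^{a},x_2]=x_2^{p^2a}$ kills $a$ mod $p^2$, $[x_4^{d},x_1]=x_3^{-p^2d}$ kills $d$, and so on), so $\Z(G)=\langle x_1^{p^2},x_2^{p^2},x_3^{p^2}\rangle$ \emph{independently of $n$}, not $\langle x_1^{p^{n-4}},x_2^{p^2},x_3^{p^2}\rangle$. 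Your formula would make $x_1$ central for $n=4$ and $x_1^{p}$ central for $n=5$, both false (e.g.\ $[x_2,x_1^{p}]=x_2^{-p^{3}}\neq 1$). Note that your own dichotomy sentence is consistent with the correct center rather than with your formula: $\gamma_2(G)$ contributes $x_1^{p^{n-2}}$ while $\Z(G)$ contributes $x_1^{p^{2}}$, and these coincide precisely when $n=4$; for $n\ge 5$ this gives $\gamma_2(G)<\Z(G)$, and in all cases $x_1^{p},x_4^{p}\in\Phi(G)\setminus\Z(G)$ gives $\Z(G)<\Phi(G)$.

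The one point you flag but do not resolve is in fact the only genuinely nontrivial content of the lemma: that the presentation does not collapse, i.e.\ that $|G|$ is exactly $p^{n+10}$ (equivalently $\gamma_2(G)\cong \C_{p^2}\times\C_{p^2}\times\C_{p^2}$ and $G/\gamma_2(G)\cong \C_{p^{n-2}}\times\C_{p^2}\times\C_{p^2}\times\C_{p^2}$ with the stated generator images). Your computation only yields the upper bound $|G|\le p^{n+10}$; the matching lower bound requires exhibiting a group of that order satisfying the relations (for instance as an explicit quotient of the free nilpotent group of class $2$ on four generators, or by a consistency check for a polycyclic presentation). The paper is silent on this as well, so your proof would need to supply it rather than merely note it as the step to ``check most carefully.''
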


Let
$e_{x_i} = x_1^{a_{i1}} x_2^{a_{i2}}x_3^{a_{i3}}x_4^{a_{i4}} =$  {\tiny$\prod$}$_{j=1}^4 x_j^{a_{ij}}$, where $x_i \in G$ and $a_{ij}$ are non-negative integers for  $1 \le i, j \le 4$.
Let $\alpha$ be an automorphism of $G$. Since the nilpotency class of $G$ is $2$ and $\gamma_2(G)$ is generated by $x_1^{p^{n-2}}$, $x_2^{p^2}$, $x_3^{p^2}$, we can write $\alpha(x_i) = x_i e_{x_i} = x_i ${\tiny$\prod$}$_{j=1}^4 x_j^{a_{ij}}$ for some non-negative integers $a_{ij}$ for $1 \le i, j \le 4$.

\begin{prop}
Let $G$ be the group defined in \eqref{group1} and $\alpha$ be an automorphism of $G$ such that $\alpha(x_i) = x_i e_{x_i} = x_i ${\tiny$\prod$}$_{j=1}^4 x_j^{a_{ij}}$, where $a_{ij}$ are some non-negative integers for $1 \le i, j \le 4$. Then the following equations hold:
\begin{eqnarray}
 && a_{31} \equiv 0 \mod{p^{n-2}},\label{e00}\\
&& -a_{32}+a_{13}+a_{13}a_{44}\equiv 0 \mod{ p^2},\label{e7}\\
&&-a_{33}+a_{44}+a_{11}+a_{11}a_{44}+a_{12}+a_{12}a_{44}\equiv 0 \mod{ p^2},\label{e8}\\
 && a_{21} \equiv 0 \mod{p^{n-2}}, \label{e0}\\
&& a_{44}-a_{22}+a_{33}+a_{33}a_{44}\equiv 0 \mod{ p^2},\label{e5}\\
&& a_{32}-a_{23}+a_{32}a_{44} \equiv 0 \mod{ p^2},\label{e6}\\
&&-a_{13}+a_{12}a_{23}-a_{13}a_{22}\equiv 0 \mod{ p^2},\label{e11}\\
&& a_{23}+ a_{11}+a_{11}a_{22}+a_{11}a_{23}+a_{13}a_{24}-a_{14}a_{23} \equiv 0 \mod{ p^2},\label{e1}\\
&& -a_{23}+a_{24}+a_{11}a_{24}-a_{14}+a_{12}a_{24}-a_{14}a_{22} \equiv 0 \mod{ p^2},\label{e2}\\
&& a_{12}+a_{12}a_{33}-a_{13}a_{32}\equiv 0 \mod{ p^2},\label{e12}\\
&& a_{32}+a_{33}+a_{11}-a_{22}-a_{14}+a_{11}a_{32}+a_{11}a_{33}+a_{13}a_{34}-a_{14}a_{33} \label{e3}\\
&& \equiv 0 \mod{ p^2},\nonumber\\
&& a_{34}+a_{11}a_{34}+a_{12}a_{34}-a_{14}a_{32}-a_{23}\equiv 0 \mod{ p^2},\label{e4}\\
&&a_{23}-a_{32}+a_{23}a_{44}\equiv 0 \mod{ p^2},\label{e9}\\
&&-a_{33}+a_{44}+a_{22}+a_{22}a_{44} \equiv 0 \mod{ p^2},\label{e10}\\
&&-a_{11}+a_{33}+a_{22}+a_{22}a_{33}-a_{23}a_{32} \equiv 0 \mod{ p^2}.\label{e13}
\end{eqnarray}
\end{prop}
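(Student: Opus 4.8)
The plan is to exploit that $\alpha$, being an automorphism, must send each defining relator of the presentation \eqref{group1} to the identity; equivalently, $\alpha$ must respect the six commutator relations and the four power relations, and these are the only constraints I will use. I would first record the bilinear ``commutator data'' of $G$: writing $z_1 = x_1^{p^{n-2}}$, $z_2 = x_2^{p^2}$, $z_3 = x_3^{p^2}$ for the three generators of $\gamma_2(G)$ (each of order $p^2$, by Lemma \ref{lemma1}), every relator commutator $[x_k,x_i]$ equals one of $z_1,z_2,z_3$. Since the class of $G$ is $2$ the commutator is bilinear, so by \eqref{0a} and \eqref{0b} the image $[\alpha(x_k),\alpha(x_i)] = [x_k e_{x_k}, x_i e_{x_i}]$ expands into a product of the $[x_l,x_j]$ with integer exponents that are the obvious bilinear expressions in the $a_{ij}$ (with the convention that the total $x_i$-exponent of $\alpha(x_i)$ is $1+a_{ii}$). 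Collecting in the basis $z_1,z_2,z_3$ thus gives, for each of the six commutator relations, three scalar congruences modulo $p^2$.

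To make each comparison usable I must also compute the right-hand sides $\alpha([x_k,x_i])$, i.e. I need $\alpha(z_1)=\alpha(x_1)^{p^{n-2}}$, $\alpha(z_2)=\alpha(x_2)^{p^2}$, $\alpha(z_3)=\alpha(x_3)^{p^2}$ expressed in $z_1,z_2,z_3$. I would simplify these using that $G$ is regular of class $2$: in the expansion of $(x_t e_{x_t})^{p^k}$ every reordering correction and every commutator correction is a $\binom{p^k}{2}$-th (or higher) power of an element of $\gamma_2(G)$, and since $\gamma_2(G)$ has exponent $p^2$, $p$ is odd, and $k\ge 2$ throughout (as $n\ge 4$), all of these corrections vanish. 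Hence $\alpha(z_t)$ reduces to a monomial $x_t^{(1+a_{tt})p^k}\prod_{j\ne t} x_j^{a_{tj}p^k}$, which one rewrites in the $z$-basis. The only subtlety is the generator $x_1$ of large order $p^n$: the factor $x_1^{a_{t1}p^2}$ lies in $\gamma_2(G)$ only once $a_{t1}$ is divisible by $p^{n-4}$ (a preliminary divisibility forced by the power relations on $x_2,x_3$), and then contributes the $z_1$-term $z_1^{a_{t1}p^{4-n}}$, so its size is governed by the divisibility of $a_{t1}$.

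With both sides in hand I would match coefficients. The power relations are handled identically: $\alpha(x_i)^{p^{e_i}}=1$ reduces to a monomial condition, and in particular $\alpha(x_4)^{p^2}=1$ forces $a_{42}\equiv a_{43}\equiv 0 \pmod{p^2}$ and $a_{41}\equiv 0 \pmod{p^{n-2}}$, which I substitute into every commutator equation. The divisibility congruences \eqref{e00} and \eqref{e0} then come not from a power relation alone but from combining it with a commutator relation: the $z_1$-coefficients of $[\alpha(x_1),\alpha(x_4)]=\alpha(z_3)$ and $[\alpha(x_3),\alpha(x_4)]=\alpha(z_2)$ collapse to $0$ after the substitution $a_{42}=a_{43}=0$, which forces the $z_1$-components of $\alpha(z_3)$ and $\alpha(z_2)$ to vanish, i.e. exactly $a_{31}\equiv 0$ and $a_{21}\equiv 0 \pmod{p^{n-2}}$. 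Feeding these back in turn cleans the surviving $z_1$-coefficient equations (such as \eqref{e11} and \eqref{e12}) to their stated form, while the $z_2$- and $z_3$-coefficient comparisons of the six commutator relations supply the rest.

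The main obstacle I expect is organizational rather than conceptual: controlling the collection and power computations so as to be certain that every central correction genuinely vanishes (this is exactly where regularity, the exponent $p^2$ of $\gamma_2(G)$, and $p$ odd are all needed), and correctly tracking the coupling caused by $x_1$ having order $p^n$ while $z_1 = x_1^{p^{n-2}}$ has order $p^2$; it is precisely this mismatch that ties the ``quadratic'' congruences arising from the commutator relations to the ``linear'' divisibility congruences \eqref{e00} and \eqref{e0}. Once the substitutions $a_{41}=a_{42}=a_{43}=0$ and \eqref{e00}, \eqref{e0} are in place, the remainder is a finite, if lengthy, bookkeeping of coefficient comparisons, which I would carry out relation by relation.
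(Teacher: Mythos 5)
Your proposal is correct and follows essentially the same route as the paper: apply $\alpha$ to each defining relation, expand $[\alpha(x_k),\alpha(x_i)]$ bilinearly via \eqref{0a}--\eqref{0b}, use regularity (class $2$, $p$ odd, $\gamma_2(G)$ of exponent $p^2$) to push $p^2$-th and $p^{n-2}$-th powers through products, and compare normal-form exponents, with the congruences $a_{41}\equiv 0 \bmod p^{n-2}$, $a_{42}\equiv a_{43}\equiv 0 \bmod p^2$ obtained first and then fed into the commutator comparisons to yield \eqref{e00}, \eqref{e0} and the remaining equations. The only cosmetic difference is that the paper derives the constraints on $\alpha(x_4)$ from the characteristic subgroup $G_{p^2}$ rather than from the relation $x_4^{p^2}=1$ directly, which amounts to the same thing.
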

\begin{proof}
Let $\alpha$ be the automorphism of $G$ such that $\alpha(x_i) = x_i e_{x_i}$, $1 \le i \le 4$ as defined above.
Since $G_{p^2}=\langle x_1^{p^{n-2}}, x_2^{p^2}, x_3^{p^2}, x_4 \rangle$ is a characteristic subgroup of $G$,  $\alpha (x_4) \in G_{p^2}$. 
Thus we get the following set of equations:
\begin{eqnarray}
a_{41} &\equiv 0& \mod{p^{n-2}},\label{e15} \\
a_{4i} &\equiv 0& \mod{p^{2}}, ~\text{for}~i = 2,\;3.\label{e16} 
\end{eqnarray}

We prove equations \eqref{e00} - \eqref{e8} by comparing the powers of $x_i$'s in $\alpha([x_1, x_4]) = \alpha(x_3^{p^2})$. 

\begin{eqnarray*}
\alpha([x_1, x_4]) &=&  [\alpha(x_1), \alpha(x_4)] = [x_1 e_{x_1}, x_4 e_{x_4}] \\
&=& [x_1, x_4] [x_1, e_{x_4}] [e_{x_1}, x_4] [e_{x_1}, e_{x_4}]\\
& = & [x_1, x_4] \text{\tiny$\prod$}_{j=1}^4 [x_1, x_j]^{a_{4j}} \text{\tiny$\prod$}_{j=1}^4 [x_4, x_j]^{-a_{1j}}  
\text{\tiny$\prod$}_{j=1}^4 \text{\tiny$\prod$}_{l=1}^4 [x_l, x_j]^{a_{1l} a_{4j}}\\
& = & [x_1, x_2]^{a_{42}  + a_{11}a_{42} - a_{12}a_{41}}
       [x_1, x_3]^{a_{43} + a_{11} a_{43} - a_{13} a_{41}}\\
& &   [x_1, x_4]^{1 + a_{44} + a_{11} + a_{11}a_{44} - a_{14}a_{41}} 
       [x_2, x_3]^{a_{12} a_{43} - a_{13} a_{42}}\\
 & &   [x_2, x_4]^{a_{12} + a_{12} a_{44} - a_{14} a_{42}}
       [x_3, x_4]^{a_{13} + a_{13} a_{44} - a_{14} a_{43}}\\
& = &  x_1^{p^{n-2}(a_{12} a_{43} - a_{13} a_{42})}\\
& &  x_2^{p^2(a_{42} + a_{43} + a_{13} + a_{11}a_{42} - a_{12}a_{41} + a_{11} a_{43} - a_{13} a_{41} + a_{13} a_{44} - a_{14} a_{43})}\\
& & x_3^{p^2(1+a_{44} +  a_{11} + a_{12} + a_{11} a_{44} - a_{14} a_{41} + a_{12} a_{44} - a_{14} a_{42})}.
\end{eqnarray*}

On the other hand
\[\alpha([x_1, x_4]) = \alpha(x_3^{p^2}) = x_3^{p^2} x_1^{p^2a_{31}} x_2^{p^2a_{32}}x_3^{p^2a_{33}}x_4^{p^2a_{24}} = 
x_1^{p^2a_{31}} x_2^{p^2a_{32}}x_3^{p^2(1+ a_{33})}.\]

Comparing the powers of $x_1$ and using \eqref{e16}, we get $a_{31} \equiv 0 \mod{ p^{n-2}}$.
Comparing the powers of $x_2$ and $x_3$, and using  \eqref{e15} - \eqref{e16}, we get
\begin{eqnarray*}
 && -a_{32}+a_{13}+a_{13}a_{44}\equiv 0 \mod{ p^2},\\
&&-a_{33}+a_{44}+a_{11}+a_{11}a_{44}+a_{12}+a_{12}a_{44}\equiv 0 \mod{ p^2}.
\end{eqnarray*}

Hence equations \eqref{e00} - \eqref{e8} hold.

Equations \eqref{e0} - \eqref{e6} are obtained by comparing the powers of $x_1$, $x_2$ and $x_3$ in $\alpha([x_3, x_4]) = \alpha (x_2^{p^2})$ and using equations \eqref{e00}, \eqref{e15} and \eqref{e16}. Equations \eqref{e11} - \eqref{e2}
are obtained by comparing the powers of $x_1$, $x_2$ and $x_3$ in $\alpha([x_1, x_2]) = \alpha (x_2^{p^2})$ and using equation
 \eqref{e0}. Equations \eqref{e12} - \eqref{e4} are obtained by comparing the powers of $x_1$, $x_2$ and $x_3$ in $\alpha([x_1, x_3]) = \alpha (x_2^{p^2})$ and using equations \eqref{e00} and \eqref{e0}. Equations \eqref{e9} - \eqref{e10}
are obtained by comparing the powers of  $x_2$ and $x_3$ in $\alpha([x_2, x_4]) = \alpha (x_3^{p^2})$ and using equations \eqref{e0}, \eqref{e15} and \eqref{e16}. The last equation \eqref{e13} is obtained by comparing the powers of $x_1$ in  $\alpha([x_2, x_3]) = \alpha (x_1^{p^{n-2}})$.
\end{proof}

\begin{thm}\label{thm1}
  Let $G$ be the group defined in \eqref{group1}. Then all automorphisms of $G$ are central. 
\end{thm}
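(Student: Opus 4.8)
The goal is to show that every automorphism $\alpha$ of $G$ satisfies $x_i^{-1}\alpha(x_i) \in \Z(G)$ for each generator $x_i$, equivalently that each $e_{x_i} = ${\tiny$\prod$}$_{j=1}^4 x_j^{a_{ij}}$ lies in $\Z(G)$. Recall from Lemma \ref{lemma1} that $\Z(G) = \Phi(G)$ when $n=4$ and $\gamma_2(G) < \Z(G) < \Phi(G)$ when $n \ge 5$; in either case a concrete description of $\Z(G)$ is available in terms of the generators, so being central is a set of divisibility conditions on the exponents $a_{ij}$. The plan is to feed the fifteen congruences \eqref{e00}--\eqref{e13} of the preceding Proposition, together with the characteristic-subgroup relations \eqref{e15}--\eqref{e16}, into a linear/quadratic elimination that forces each $a_{ij}$ to be divisible by the exponent making $x_j^{a_{ij}}$ central.

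First I would read off exactly which divisibilities I must prove. Since $\alpha(x_i) = x_i e_{x_i}$, the automorphism is central precisely when $x_j^{a_{ij}} \in \Z(G)$ for all $i,j$; concretely this means $a_{i1} \equiv 0 \bmod p^{n-2}$ for the $x_1$-component (as $x_1^{p^{n-2}}$ generates the relevant central piece) and $a_{ij} \equiv 0 \bmod p^2$ for $j = 2,3,4$, with $x_4$ handled separately via \eqref{e15}--\eqref{e16}. So the target is: all $a_{i1} \equiv 0 \bmod p^{n-2}$ and all off-central exponents $\equiv 0 \bmod p^2$. Equations \eqref{e00} and \eqref{e0} already give $a_{31} \equiv a_{21} \equiv 0 \bmod p^{n-2}$, and \eqref{e15} gives $a_{41} \equiv 0 \bmod p^{n-2}$, so the $x_1$-column is essentially done except for $a_{11}$, which I expect to be free (it is the ``diagonal'' generator direction and need not be central).

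Next I would exploit the structure of the congruences modulo $p^2$. The key observation is that products like $a_{ij}a_{kl}$ appearing in \eqref{e7}--\eqref{e13} are \emph{quadratic}, but the linear parts dominate: I would first solve the linear system obtained by a preliminary reduction showing the relevant $a_{ij}$ are divisible by $p$ (so that the quadratic cross-terms vanish mod $p^2$), and then bootstrap. Concretely, subtracting suitable pairs of equations to cancel quadratic terms should produce clean linear relations. For instance, combining \eqref{e7} with \eqref{e11}, \eqref{e6} with \eqref{e9}, and \eqref{e8}, \eqref{e10}, \eqref{e5}, \eqref{e13} pairwise should isolate the differences and sums of the diagonal entries $a_{11}, a_{22}, a_{33}, a_{44}$ and force the remaining entries $a_{12}, a_{13}, a_{23}, a_{32}, a_{24}, a_{34}, a_{14}$ to be $\equiv 0 \bmod p^2$. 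The structure of the defining commutators (each $[x_i,x_j]$ being a $p^2$-th or $p^{n-2}$-th power) is what makes the elimination close up.

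The main obstacle will be handling the quadratic cross-terms rigorously rather than treating the system as linear. I would address this in two passes: a first pass proving $p \mid a_{ij}$ for the off-diagonal entries (reducing all equations mod $p$, where they become a manageable linear system), after which every product $a_{ij}a_{kl}$ with at least one off-diagonal factor is $\equiv 0 \bmod p^2$; and a second pass feeding this back to upgrade $p \mid a_{ij}$ to $p^2 \mid a_{ij}$. A secondary subtlety is the interdependence between the $n=4$ and $n\ge 5$ cases, since the center differs; but the congruences themselves are uniform in $n$ (only the modulus $p^{n-2}$ for the $x_1$-components depends on $n$), so the divisibility conclusions for the $p^2$-columns are $n$-independent and only the $x_1$-column argument needs the mild observation that $n-2 \ge 2$. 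Once all the target divisibilities are established, each $e_{x_i} \in \Z(G)$ and the theorem follows.
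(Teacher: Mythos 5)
There is a genuine gap, and it sits exactly where you wave your hands: the claim that reducing the congruences modulo $p$ turns them into ``a manageable linear system.'' Reduction mod $p$ does not kill the quadratic cross-terms $a_{13}a_{44}$, $a_{22}a_{33}$, $a_{23}a_{32}$, etc.; the system \eqref{e00}--\eqref{e13} is genuinely quadratic over $\mathbb{Z}/p\mathbb{Z}$ as well, and it has nonzero solutions. For instance, $a_{11}\equiv a_{22}\equiv a_{33}\equiv a_{44}\equiv -1$ with all off-diagonal entries zero satisfies every one of the fifteen congruences (check \eqref{e8}: $1-1-1+1\equiv 0$; \eqref{e13}: $1-1-1+1\equiv 0$; and so on), so no amount of elimination --- linear or otherwise --- applied to the congruences alone can force all $a_{ij}\equiv 0$. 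The paper's proof closes these branches not by algebra on the congruences but by order and injectivity arguments on $\alpha$: it first shows $p\nmid 1+a_{44}$ because otherwise $\alpha(x_4^p)\in\Z(G)$ while $x_4^p\notin\Z(G)$; later it needs $p\nmid 1+a_{33}$ (else $\alpha(x_3^{p^3})=1$), $p^2\nmid 1+a_{11}$ (else $\alpha(x_1)^{p^{n-2}}=1$, contradicting $|x_1|=p^n$), and in the hardest branch it computes $(\alpha(x_2x_1^{-1}))^{p^2}=x_1^{-p^2(1+a_{11})}$ to get $p\nmid 1+a_{11}$. Your proposal never invokes any such non-degeneracy input, and without it the elimination cannot ``close up.'' Moreover, the paper's actual structure is a case split on whether $a_{13}$ is $0$ mod $p^2$, nonzero mod $p^2$ but $0$ mod $p$, or a unit mod $p$; in the unit case one is forced into $1+a_{22}\equiv a_{12}\pmod{p^2}$, a configuration with off-diagonal units that your two-pass scheme (first prove $p\mid a_{ij}$ off the diagonal, then bootstrap) simply assumes away rather than refutes.

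A secondary but real error: you assert that $a_{11}$ ``need not be central'' and is free. Since $\Z(G)=\gen{x_1^{p^2},x_2^{p^2},x_3^{p^2}}$, centrality of $\alpha$ requires $e_{x_1}=x_1^{a_{11}}x_2^{a_{12}}x_3^{a_{13}}x_4^{a_{14}}\in\Z(G)$, hence $p^2\mid a_{11}$ (e.g., $[x_1^{a_{11}},x_2]=x_2^{p^2a_{11}}$ must vanish), and the paper does prove $a_{11}\equiv 0\pmod{p^2}$. Your target list of divisibilities is therefore also slightly wrong. To repair the proposal you would need to (a) identify the correct centrality conditions ($p^2\mid a_{ij}$ for all $i,j$, with the stronger $p^{n-2}$ statements for $a_{21},a_{31},a_{41}$ coming for free from \eqref{e00}, \eqref{e0}, \eqref{e15}), and (b) supplement the congruences with the order-based facts above before any elimination, organizing the argument as a case analysis on the units among $a_{13}$, $a_{44}$, etc., as the paper does.
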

\begin{proof}
 We start with the claim that $1 + a_{44} \not\equiv 0 \mod{p}$. For, let us assume the contrary, i.e., $p$ divides $1 +a_{44}$. Then
\[\alpha(x_4^p) =  \alpha(x_4)^p = x_4^{p(1+a_{44})}(x_1^{a_{41}}x_2^{a_{42}}x_3^{a_{43}})^p \in \Z(G),\]
since $a_{4j} \equiv 0 \mod{p^2}$ for $1 \le j \le 3$ by equations \eqref{e15} and \eqref{e16}. But this is not possible as $x_4^p \not\in
\Z(G)$. This proves our claim. Subtracting \eqref{e9} from \eqref{e7}, we get $(1 + a_{44})(a_{13} - a_{23}) \equiv 0 \mod{p^2}$. Since $p$ does not divide $1+a_{44}$, we get
\begin{equation}
 a_{13} \equiv a_{23} \mod{p^2}.\label{e17}
\end{equation}
By equations \eqref{e11} and \eqref{e17} we have
\begin{equation}
 a_{13}(1 - a_{12} + a_{22}) \equiv 0 \mod{p^2}.\label{e18}
\end{equation}
Here we have three possibilities, namely (i) $a_{13} \equiv 0 \mod{p^2}$, (ii) $a_{13} \equiv 0 \mod{p}$, but $a_{13} \not\equiv 0 \mod{p^2}$, (iii) $a_{13} \not\equiv 0 \mod{p}$. We are going to show that cases (ii) and (iii) do not occur and in the case (i) $a_{ij} \equiv 0 \mod{p^2}$, $1 \le i, j \le 4$.

\noindent {\bf Case (i).} Assume that $a_{13} \equiv 0 \mod{p^2}$. Equations \eqref{e6} and \eqref{e17}, together with the fact that $p$ does not divide $1+a_{44}$, gives $a_{32} \equiv 0 \mod{p^2}$. We claim that $1 + a_{33} \not\equiv 0 \mod{p}$. Suppose $p$ divides $1 + a_{33}$. Since $a_{32} \equiv 0 \mod{p^2}$ and $a_{31} \equiv 0 \mod{p^{n-2}}$ (equation \eqref{e00}), we get $\alpha(x_3^{p^3}) = x_1^{p^3a_{31}}x_2^{p^3a_{32}}x_3^{p^3(1+a_{33})}x_4^{p^3a_{34}} = 1$, which is not possible. This proves our claim. So by equation \eqref{e12}, we get $a_{12} \equiv 0 \mod{p^2}$.

Subtracting  \eqref{e10} from \eqref{e8}, we get $(a_{11} - a_{22})(1 + a_{44}) \equiv 0 \mod{p^2}$. This implies that $a_{11} - a_{22} \equiv 0 \mod{p^2}$. Since $a_{i3} \equiv 0 \mod{p^2}$ for $i = 1, 2$, by equation \eqref{e1} we get $a_{11}(1 + a_{11}) \equiv 0 \mod{p^2}$. Thus $p^2$ divides  $a_{11}$ or $1+a_{11}$. We claim that $p^2$ can not divide  $1+a_{11}$. For, suppose the contrary, i. e., 
 $a_{11} \equiv -1~ \mod{p^2}$.  Since $n-2 \geq 2$ and $a_{12} \equiv a_{13} \equiv 0 \mod{p^2}$, we get
\[\alpha(x_1)^{p^{n-2}} = x_1^{p^{n-2}(1+a_{11})}x_2^{p^{n-2}a_{12}}x_3^{p^{n-2}a_{13}}x_4^{p^{n-2}a_{14}}=1.\]
This contradiction, to the fact that order of $x_1$ is $p^n$, proves our claim. Hence $p^2$ divides $a_{11}$. Since  $a_{11} - a_{22} \equiv 0 \mod{p^2}$, by equation \eqref{e10}, it follows that $a_{33} \equiv a_{44}  \mod{p^2}$. Putting the values $a_{23}$, $a_{11}$ and $a_{22}$ in \eqref{e13}, we get $a_{33} \equiv 0 \mod{p^2}$. Thus $a_{44} \equiv 0 \mod{p^2}$. Putting values of $a_{32}$, $a_{33}$, $a_{11}$, $a_{22}$ and $a_{13}$ in \eqref{e3}, we get $a_{14} \equiv 0 \mod{p^2}$. Putting values of $a_{12}$, $a_{14}$, $a_{11}$ and $a_{23}$ in \eqref{e4}, we get $a_{34} \equiv 0 \mod{p^2}$. Putting above values in \eqref{e2}, we get $a_{24} \equiv 0 \mod{p^2}$. Hence $a_{ij} \equiv 0 \mod{p^2}$ for $1 \le i, j \le 4$.

\noindent {\bf Case (ii).} Assume that $a_{13} \equiv 0 \mod{p}$, but $a_{13} \not\equiv 0 \mod{p^2}$.
Equation \eqref{e18} implies that $(1 - a_{12} + a_{22}) \equiv 0 \mod{p}$. Now consider all the equations \eqref{e0}-\eqref{e13} $\mod{p}$. Repeating the arguments of Case (i) after replacing $p^2$ by $p$,
we get the following facts: (a) $a_{32} \equiv 0 \mod{p}$ (by \eqref{e6}); (b) $a_{12} \equiv 0 \mod{p}$ (by \eqref{e12}); (c) $a_{11} - a_{22} \equiv 0 \mod{p}$ (subtracting  \eqref{e10} from \eqref{e8}); (d) $a_{11}(1 + a_{11}) \equiv 0 \mod{p}$ (by \eqref{e1}). We claim that $a_{11} \equiv 0 \mod{p}$. For, suppose that $a_{11} +1 \equiv 0 \mod{p}$. Since $n-1 \geq 3$ and $a_{12} \equiv a_{13} \equiv 0 \mod{p}$, it follows that $\alpha(x_1)^{p^{n-1}} = x_1^{p^{n-1}(1+a_{11})}x_2^{p^{n-1}a_{12}}x_3^{p^{n-1}a_{13}}x_4^{p^{n-1}a_{14}} = 1$, which is a contradiction. This proves that $p$ can not divide $a_{11}+1$. Hence $p$ divides $a_{11}$, and therefore by fact (c), we have $a_{22} \equiv 0 \mod{p}$. This gives a contradiction to the fact that $(1 - a_{12} + a_{22}) \equiv 0 \mod{p}$. Thus Case (ii) does not occur.

\noindent {\bf Case (iii).} Finally assume that $a_{13} \not\equiv 0 \mod{p}$. Thus $(1 - a_{12} + a_{22}) \equiv 0 \mod{p^2}$, i.e., $1+a_{22} \equiv a_{12} \mod{p^2} $ (we'll use this information throughout the remaining proof without referring).
Notice that $(\alpha(x_2x_1^{-1}))^{p^2}=x_1^{-p^2(1+a_{11})}$. Since the order of $(\alpha(x_2x_1^{-1}))^{p^2}$ is $ p^{n-2}$, $p$ does not divide $(1+a_{11})$.  Putting the value of $a_{32}$ from \eqref{e9} into \eqref{e6}, we have $a_{23}=a_{23}{(1+a_{44})^2} \mod {p^2}$. Since $a_{23} \equiv a_{13} \mod{p^2}$ (equation \eqref{e17}) and $a_{13} \not\equiv 0 \mod{p}$, it follows that $a_{23} \not \equiv 0 \mod{p}$. Hence $(1+a_{44})^2\equiv 1 \mod {p^2}$. This gives $a_{44}(a_{44}+2) \equiv 0 \mod{p^2}$. Thus we have three cases (iii)(a) $ a_{44} \equiv 0 \mod{p^2}$, (iii)(b) $a_{44} \equiv 0 \mod{p}$, but $a_{44} \not\equiv 0 \mod{p^2}$ and (iii)(c) $a_{44} \not\equiv 0 \mod{p}$. We are going to consider these cases one by one.

\noindent {\bf Case (iii)(a).} Suppose that $ a_{44} \equiv 0 \mod{p^2}$. Using this in \eqref{e9} and \eqref{e10}, we get $a_{32} \equiv a_{23} \mod{p^2}$ and   $a_{22} \equiv a_{33} \mod{p^2}$ respectively.    
Putting the value of $a_{44}$ in \eqref{e8}, we have $a_{12}+a_{11} \equiv a_{33} \mod{p^2}$. Further, replacing $a_{12} $ by $1+a_{22}$ and $a_{22}$ by $a_{33}$, we have $1+a_{11} \equiv 0 \mod{p^2}$, which is a contradiction.

\noindent {\bf Case (iii)(b).} Suppose that $a_{44} \equiv 0 \mod{p}$, but $a_{44} \not\equiv 0 \mod{p^2}$. 
Notice that by reading the equations $\mod{p}$, arguments of Case (iii)(a) show that $1 + a_{11} \equiv 0 \mod{p}$, which is again a contradiction.
 
\noindent {\bf Case (iii)(c).} Suppose that $a_{44} \not\equiv 0 \mod{p}$. This implies that $a_{44} \equiv -2 \mod{p^2}$.
 Putting this value of $a_{44}$ in the difference of \eqref{e5} and \eqref{e8}, we get
   $a_{11}+a_{12}-a_{22} \equiv 0 \mod{p^2}$. Since $1+a_{22} \equiv a_{12} \mod{p^2}$, this equation contradicts the fact that $1+a_{11} \not\equiv 0 \mod{p}$.

Thus Case (iii) can not occur. This completes the proof of the theorem. \hfill $\Box$

\end{proof}

Now we are ready to prove Theorem A stated in the introduction.\\

\noindent {\bf Proof of Theorem A.}
Let $G$ be the group defined in \eqref{group1}. By  Lemma \ref{lemma1}, we have  $|G| = p^{n+10}$, $\gamma_2(G) = \Z(G) < \Phi(G)$ for $n = 4$ and   $\gamma_2(G) < \Z(G) < \Phi(G)$ for $n \ge 5$. By Theorem \ref{thm1}, we have $\Aut(G) = \Autcent(G)$. Thus to complete the proof of the theorem, it is sufficient to prove that $\Autcent(G)$ is an abelian group. Since $\Z(G) < \Phi(G)$, $G$ is purely non-abelian. The exponents of $\Z(G)$, $\gamma_2(G)$ and $G/\gamma_2(G)$ are $p^{n-2}$, $p^2$ and $p^{n-2}$ respectively. Thus we get  
\[R =\{z \in \Z(G) \mid \; |z| \le p^{n-2}\} = \Z(G)\]
and 
\[K =  \{x \in G \mid \het(x\gamma_2(G)) \ge p^2\} = G^{p^2}\gamma_2(G) = \Z(G).\]
This shows that $R = K$. Also $R/ \gamma_2(G) = \Z(G)/ \gamma_2(G) = \gen{x_1^{p^2}\gamma_2(G)}$. Thus all the conditions of Theorem \ref{thmAY} are now satisfied. Hence $\Autcent(G)$ is abelian. That the order of $\Aut(G)$ is $p^{n+20}$ can be easily proved by using Lemmas \ref{lemma1a}, \ref{lemma1b}, Proposition \ref{prop1} and the structures of $G/\gamma_2(G)$ and $\Z(G)$. This completes the proof of the theorem. \hfill $\Box$

\section{Groups $G$ with $\Aut(G)$ elementary abelian}

In this section we construct $p$-groups with elementary abelian automorphism group.
We start with the following result, which provides some structural information of a group $G$ for which $\Aut(G)$ is elementary abelian.
 
\begin{thm}\label{thm1BL}
 Let $G$ be a finite $p$-group such that $\Aut(G)$ is elementary abelian, where $p$ is an odd prime. Then one of the following two conditions holds true:
\begin{enumerate}
 \item $\Z(G) = \Phi(G)$ is elementary abelian;
\item  $\gamma_2(G) = \Phi(G)$ is elementary abelian.
\end{enumerate}
Moreover, the exponent of $G$ is $p^2$.
\end{thm}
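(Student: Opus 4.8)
The plan is to extract the structural consequences of the hypothesis that hold regardless of any case split, and only then to invoke the dichotomy coming from Theorem \ref{thmJ}. First I would dispatch the easy reductions. Since $\Aut(G)$ is an elementary abelian $p$-group with $p$ odd, it has no element of order $2$; as the inversion map is an automorphism of order $2$ on any non-trivial abelian group of exponent $>2$, the group $G$ must be non-abelian. Because $\Aut(G)$ is abelian, the theorem of Hopkins \cite{cH27} shows $G$ is purely non-abelian, and $\Autcent(G) = \Aut(G)$, so $\Autcent(G)$ is elementary abelian. Next, $\Inn(G) \cong G/\Z(G)$ is a subgroup of the elementary abelian group $\Aut(G)$, hence $G/\Z(G)$ is elementary abelian. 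This forces $\gamma_2(G) \le \Z(G)$ (so $G$ has class $2$) and $G^p \le \Z(G)$; combined with the Burnside basis theorem $\Phi(G) = \gamma_2(G)G^p$ this yields the key containment $\Phi(G) \le \Z(G)$, which I will use in both branches.

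Now I would apply Theorem \ref{thmJ}: since $\Autcent(G)$ is elementary abelian and $G$ is purely non-abelian with $p$ odd, either the exponent of $\Z(G)$ is $p$ or the exponent of $G/\gamma_2(G)$ is $p$. In the case that the exponent of $G/\gamma_2(G)$ is $p$, we have $G^p \le \gamma_2(G)$, so $\Phi(G) = \gamma_2(G)$; and in class $2$ the identity $[x,y]^p = [x^p,y]$ with $x^p \in \gamma_2(G) \le \Z(G)$ gives $[x,y]^p = 1$ for all $x,y$, whence every commutator has order dividing $p$ and $\gamma_2(G) = \Phi(G)$ is elementary abelian. This is condition (2).

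In the case that the exponent of $\Z(G)$ is $p$, the subgroup $\Z(G)$ is elementary abelian, and I must upgrade the containment $\Phi(G) \le \Z(G)$ to an equality. Here I would argue by contradiction: any $z \in \Z(G) \setminus \Phi(G)$ has order $p$ and, lying outside some maximal (hence normal, index $p$) subgroup $M$, satisfies $M \cap \langle z\rangle = 1$ and $G = M\langle z\rangle$; since $z$ is central this gives $G = M \times \langle z\rangle$ with abelian direct factor $\langle z\rangle$, contradicting pure non-abelianness. Therefore $\Z(G) \le \Phi(G)$, so $\Z(G) = \Phi(G)$ is elementary abelian, which is condition (1).

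Finally, for the exponent statement I would note that the exponent of $G$ is at least $p^2$, since otherwise $G$ would be non-abelian of exponent $p$ and Theorem \ref{thmE} would make $\Aut(G)$ non-abelian; and it is at most $p^2$ because in either branch $x^p$ lands in an elementary abelian subgroup ($\gamma_2(G)$ or $\Z(G)$), so $x^{p^2}=1$. Hence the exponent of $G$ is $p^2$. The step I expect to be the main obstacle is the branch where the exponent of $\Z(G)$ is $p$: establishing the equality $\Z(G) = \Phi(G)$, rather than the free containment $\Phi(G) \le \Z(G)$, is precisely where the full force of pure non-abelianness enters, through the splitting off of a central element of order $p$ lying outside the Frattini subgroup.
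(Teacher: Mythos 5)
Your proof is correct and follows essentially the same route as the paper: Hopkins's result to obtain pure non-abelianness, Theorem \ref{thmJ} for the dichotomy on the exponents of $\Z(G)$ and $G/\gamma_2(G)$, the splitting-off of a central element outside a maximal subgroup to force $\Z(G)=\Phi(G)$ in the first branch, and Theorem \ref{thmE} to rule out exponent $p$. You merely make explicit a few details the paper leaves implicit (that $G$ is non-abelian, that $\Phi(G)\le \Z(G)$, and that $\exp(\gamma_2(G))=p$ in the second branch), so no further comment is needed.
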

\begin{proof}
 Since $\Aut(G)$ is elementary abelian, $G/Z(G)$ is elementary abelian and it follows from a result of Hopkins \cite[Section 3]{cH27} that $G$ is purely non-abelian. It follows from Theorem \ref{thmJ} that either  $Z(G)$ or $G/\gamma_2(G)$ is of exponent $p$. If the exponent of $Z(G)$ is $p$, then $Z(G) \le \Phi(G)$. Indeed, if $\Phi(G) < \Z(G)$, then $G$ has a non-trivial abelian direct factor, which is not possible as $G$ is purely non-abelian. Hence  $\Z(G) = \Phi(G)$ is elementary abelian. If the exponent of $G/\gamma_2(G)$ is $p$, then obviously $\gamma_2(G) = \Phi(G)$. Since the exponent of $\gamma_2(G)$ is equal to the exponent of $G/\Z(G)$, it follows that $\gamma_2(G) = \Phi(G)$ is elementary abelian. In any case the exponent of $\Phi(G)$ is $p$. Thus the exponent of $G$ is at most $p^2$. That the exponent of $G$ can not be $p$, follows from Theorem \ref{thmE}. Hence the exponent of $G$ is $p^2$. This completes the proof of the theorem.
\hfill $\Box$

\end{proof}

 Now we proceed to construct $p$-group $G$ such that $\Phi(G) < Z(G)$, and $\Aut(G)$ and  $\gamma_2(G) = \Phi(G)$ are elementary abelian.  Let $p$ be any prime, even or odd. Consider the group
\begin{eqnarray} \label{group2}
 G_1 &=& \langle x_1,x_2,x_3,x_4,x_5 \mid x_1^{p^2}= x_2^{p^2}=x_3^{p^2}= x_4^{p^2}= x_5^{p} = 1, [x_1,x_2] = x_1^p,\\
    & &[x_1,x_3] = x_3^p, [x_1,x_4] = 1, [x_1,x_5] = x_1^p, [x_2,x_3] = x_2^p, [x_2,x_4] = 1,\nonumber\\  
    & &[x_2,x_5] = x_4^p, [x_3,x_4] = 1, [x_3,x_5] = x_4^p, [x_4, x_5] = 1 \rangle.\nonumber
\end{eqnarray}

It is easy to see the following properties of $G_1$.

\begin{lemma}\label{lemma2}
The group $G_1$ is a p-group having order $p^9$, $\gamma_2(G_1) = \Phi(G_1) < \Z(G_1)$, $\Phi(G_1)$ is elementary abelian and the exponent of $Z(G_1)$ is $p^2$, where $p$ is any prime. Moreover, if $p$ is odd, then $G_1$ is regular.
\end{lemma}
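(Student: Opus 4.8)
The plan is to first establish that $G_1$ has nilpotency class $2$ and order exactly $p^9$, and then to read off every remaining assertion from the resulting normal form. Since each defining commutator $[x_i,x_j]$ equals one of the $p$-th powers $x_1^p, x_2^p, x_3^p, x_4^p$, the subgroup $N := \gen{x_1^p, x_2^p, x_3^p, x_4^p}$ contains $\gamma_2(G_1)$, so $G_1/N$ is abelian and $\gamma_2(G_1) \le N$. Each generator of $N$ has order dividing $p$ because $x_i^{p^2} = 1$, and, once class $2$ is known, it is central since $[x_i^p, x_j] = [x_i, x_j]^p = (x_k^p)^p = x_k^{p^2} = 1$. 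Granting that $N$ is central, $G_1$ has class $2$, and collecting with the relations brings every element to the normal form $x_1^{a_1}x_2^{a_2}x_3^{a_3}x_4^{a_4}x_5^{a_5}$ with $0 \le a_i < p^2$ for $1 \le i \le 4$ and $0 \le a_5 < p$; this yields the upper bound $|G_1| \le p^9$.

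The step that actually requires care, and the main obstacle, is to show that these $p^9$ normal forms are genuinely distinct, equivalently that the presentation is consistent: that $N$ really is central of order $p^4$ (its four generators $x_i^p$ being independent of order $p$) and that $G_1/\gamma_2(G_1)$ is elementary abelian of rank $5$. I would settle this either by checking the overlap (consistency) conditions of the class-$2$ power--commutator presentation---which, because every commutator value is a central element of order $p$ and the power relations $x_i^{p^2}=1$, $(x_i^p)^p=1$ are mutually compatible, collapse to the already-noted identities $[x_i^p,x_j]=[x_i,x_j]^p=1$---or, equivalently, by exhibiting an explicit central extension of $(\Z/p)^5$ by $(\Z/p)^4$ realizing the prescribed alternating commutator form and power map, and verifying that the abstract group surjects onto it. Either route gives $|\gamma_2(G_1)| = p^4$, $|G_1/\gamma_2(G_1)| = p^5$, hence $|G_1| = p^9$, with class exactly $2$ because $[x_1,x_2] = x_1^p \neq 1$.

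With the order and normal form in hand, the remaining claims are routine. First, $\gamma_2(G_1) = N$ is elementary abelian of order $p^4$. Since $G_1$ has class $2$, the identity $(xy)^p = x^py^p[y,x]^{\binom{p}{2}}$ shows $G_1^p = \gen{x_1^p, x_2^p, x_3^p, x_4^p} = N$ (for odd $p$ the factor $[y,x]^{\binom{p}{2}}$ is trivial, as $[y,x]$ has order $p$ and $p \mid \binom{p}{2}$; for $p=2$ it already lies in $N$), whence $\Phi(G_1) = \gamma_2(G_1)\,G_1^p = N$ is elementary abelian. Next I would compute $\Z(G_1)$ by imposing $[g,x_j]=1$ for $g = \prod_i x_i^{a_i}$ and all $j$ and using bilinearity of the commutator; the commutator table forces $a_1 \equiv a_2 \equiv a_3 \equiv a_5 \equiv 0 \pmod p$ with $a_4$ free, so $\Z(G_1) = \gen{x_1^p, x_2^p, x_3^p, x_4}$. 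In particular $x_4 \in \Z(G_1)$ has order $p^2$, so the exponent of $\Z(G_1)$ is $p^2$ (it is at most $p^2$ since $G_1$ has exponent $p^2$), and $x_4 \notin \Phi(G_1)$ because its image in the rank-$5$ elementary abelian quotient $G_1/\Phi(G_1)$ is nontrivial; this gives the strict inclusion $\Phi(G_1) = \gamma_2(G_1) < \Z(G_1)$.

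Finally, for $p$ odd the nilpotency class of $G_1$ is $2 < p$, so $G_1$ is regular by P. Hall's theorem that a $p$-group of class less than $p$ is regular. I expect no difficulty here beyond citing the standard result, and the only genuinely delicate point in the whole argument remains the consistency/exact-order verification flagged in the second paragraph.
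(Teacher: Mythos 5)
Your proposal is correct. The paper offers no proof of this lemma at all --- it simply asserts that the properties are ``easy to see'' --- and your argument is the standard verification one would supply: you rightly isolate the consistency of the power--commutator presentation (equivalently, that $|G_1|$ is exactly $p^9$) as the only nontrivial point, and either of your two routes (overlap checking, or realizing $G_1$ as an explicit central extension of $(\mathbb{Z}/p)^5$ by $(\mathbb{Z}/p)^4$ with the prescribed alternating form and power map) settles it. The subsequent computations --- $\Phi(G_1)=\gamma_2(G_1)=\gen{x_1^p,x_2^p,x_3^p,x_4^p}$ elementary abelian, $\Z(G_1)=\gen{x_1^p,x_2^p,x_3^p,x_4}$ of exponent $p^2$ via the bilinear commutator conditions, and regularity for odd $p$ from Hall's class-less-than-$p$ criterion --- are all correct.
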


As mentioned in the introduction, it can be checked by using GAP that for $p = 2$, $\Aut(G_1)$ is elementary abelian. So we assume that $p$ is odd.
For $1 \le i \le 5$, let $e_{x_i} = x_1^{a_{i1}}x_2^{a_{i2}}x_3^{a_{i3}}x_4^{a_{i4}}x_5^{a_{i5}}$, where $a_{ij}$ are non-negative integers for $1\leq i,j\leq 5$. 
Let $\alpha$ be an arbitrary automorphism of $G_1$. Since the nilpotency class of $G_1$ is $2$ and $\gamma_2(G_1)$ is generated by the set $\{x_i^p \mid 1 \le i \le 4\}$,  we can write 
\begin{equation}\label{aut1}
 \alpha(x_i) = x_i \prod_{j=1}^5 x_j^{a_{ij}}
\end{equation}
 for some non-negative integers $a_{ij}$ for $1 \le i, j \le 5$. 

\begin{lemma}\label{lemmab}
Let $\alpha$ be the automorphism of $G_1$ defined in \eqref{aut1}. Then
\begin{equation}\label{eqn4table1}
 a_{4j} \equiv 0 \mod{p} \;\;\text{for} \;\; j = 1, 2, 3, 5.
\end{equation}
\end{lemma}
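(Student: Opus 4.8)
The plan is to use that $x_4$ is central in $G_1$ together with the fact that an automorphism must send the center into itself. First I would read off from the relations in \eqref{group2} that $x_4$ commutes with every generator, namely $[x_1,x_4]=[x_2,x_4]=[x_3,x_4]=[x_4,x_5]=1$, so that $x_4 \in \Z(G_1)$. Since $\alpha \in \Aut(G_1)$ preserves the center, this immediately gives $\alpha(x_4) \in \Z(G_1)$, and the whole lemma will come down to translating this membership into congruences on the $a_{4j}$.

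To do that I would first determine $\Z(G_1)$ explicitly. Because $G_1$ has class $2$, for any $g = x_1^{b_1}x_2^{b_2}x_3^{b_3}x_5^{b_5}$ the commutator $[g,x_k]$ expands through \eqref{0a} as a product of the basic commutators $[x_i,x_k]$, each of which is one of $x_1^p, x_2^p, x_3^p, x_4^p$ up to sign. Imposing $[g,x_k]=1$ for $k=1,2,3,5$ and using that $x_1^p, x_2^p, x_3^p, x_4^p$ are independent generators of the elementary abelian group $\gamma_2(G_1)$, I would obtain the conditions $b_1 \equiv b_2 \equiv b_3 \equiv b_5 \equiv 0 \pmod p$. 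Combined with $\gamma_2(G_1) \le \Z(G_1)$ and $x_4 \in \Z(G_1)$, this identifies $\Z(G_1) = \gen{x_1^p, x_2^p, x_3^p, x_4}$, of order $p^5$, and shows that $G_1/\Z(G_1)$ is elementary abelian of order $p^4$ with the images $\br{x_1}, \br{x_2}, \br{x_3}, \br{x_5}$ as a basis.

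Finally I would project the relation $\alpha(x_4) = x_4\, x_1^{a_{41}}x_2^{a_{42}}x_3^{a_{43}}x_4^{a_{44}}x_5^{a_{45}}$ into $G_1/\Z(G_1)$. There $x_4$, $x_1^p$, $x_2^p$, $x_3^p$ all become trivial (and $x_5^p=1$), while class $2$ ensures that reordering the factors only introduces elements of $\gamma_2(G_1) \le \Z(G_1)$; hence the image of $\alpha(x_4)$ equals $\br{x_1}^{a_{41}}\br{x_2}^{a_{42}}\br{x_3}^{a_{43}}\br{x_5}^{a_{45}}$. Since $\alpha(x_4) \in \Z(G_1)$ this image is trivial, and the independence of $\br{x_1}, \br{x_2}, \br{x_3}, \br{x_5}$ forces $a_{4j} \equiv 0 \pmod p$ for $j = 1,2,3,5$. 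The only real work is the middle step, the explicit computation of $\Z(G_1)$; the surrounding argument is then routine, so I expect the commutator bookkeeping for the center to be the main (though modest) obstacle.
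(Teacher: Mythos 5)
Your proposal is correct and follows exactly the paper's argument: $x_4$ is central, so $\alpha(x_4)\in\Z(G_1)$, and membership in the center forces $a_{4j}\equiv 0 \pmod p$ for $j=1,2,3,5$. You merely make explicit the computation $\Z(G_1)=\gen{x_1^p,x_2^p,x_3^p,x_4}$, which the paper leaves to the reader, and your computation is accurate.
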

\begin{proof}
Since $x_4 \in Z(G_1)$,  it follows that $\alpha(x_4) = x_4^{1+a_{44}}x_1^{a_{41}}x_2^{a_{42}} x_3^{a_{43}}x_5^{a_{45}} \in Z(G_1)$. This is possible only when $a_{4j} \equiv 0 \mod{p}$ for $j = 1, 2, 3, 5$, which completes the proof of the lemma.
\hfill $\Box$

\end{proof}

We'll make use of the following table in the proof of Theorem B, which is produced in the following way. The equation in the $k$th row is obtained by applying $\alpha$ on the relation in $k$th row, then comparing the powers of $x_i$ in the same row, and using preceding equations in the table and equations \eqref{eqn4table1}. For example, equation in $5$th row is obtained by applying $\alpha$ on $[x_1,x_3] = x_3^p$, then comparing the powers of $x_2$ and using equations in $2$nd and $3$rd row.

\begin{table}[h]
\centering
\begin{tabular}{|c||c|c|c|}
\hline
No. & equations & relations         & $x_i$'s  \\
\hline
\hline
1 & $a_{5j}\equiv 0 \mod{p}$, 1$\leq j\leq$ 4 & $x_5^{p} = 1$ & $x_1,x_2,x_3,x_4$\\
\hline
2 &$a_{12}  \equiv 0 \mod{p}$ & $[x_1,x_5] = x_1^p$ & $x_2$\\
\hline
3 & $a_{13}  \equiv 0 \mod{p}$ & $[x_1,x_5] = x_1^p$ & $x_3$\\
\hline
4 & $a_{14}  \equiv 0 \mod{p}$ & $[x_1,x_5] = x_1^p$ & $x_4$\\
\hline
5 & $a_{32}  \equiv 0 \mod{p}$ & $[x_1,x_3] = x_3^p$ & $x_2$\\
\hline
6 & $a_{55}(1+a_{11})\equiv 0 \mod{p}$ & $[x_1,x_5] = x_1^p$ & $x_1$\\
\hline
7 & $a_{23}(1+a_{11})\equiv 0 \mod{p}$ & $[x_1,x_2] = x_1^p$ & $x_3$\\
\hline
8 & $a_{21}+a_{21}a_{55} \equiv 0 \mod{p}$ & $[x_2,x_5] = x_4^p$ & $x_1$\\
\hline
9 & $a_{31}+a_{31}a_{55} \equiv 0 \mod{p}$ & $[x_3,x_5] = x_4^p$ & $x_1$\\
\hline
10 & $a_{35} + a_{11}a_{35} - a_{15}a_{31}- a_{31} \equiv 0 \mod{p}$ & $[x_1,x_3] = x_3^p$ & $x_1$\\
\hline
11 & $a_{11}(1+a_{33}) \equiv 0 \mod{p}$ & $[x_1,x_3] = x_3^p$ & $x_3$\\
\hline
12 & $a_{33}(1+a_{22}) \equiv 0 \mod{p}$ & $[x_2,x_3] = x_2^p$ & $x_2$\\
\hline
13 & $a_{55} + a_{33} + a_{33}a_{55} - a_{44} \equiv 0 \mod{p}$ & $[x_3,x_5] = x_4^p$ & $x_4$\\
\hline
14 & $a_{55} + a_{22} + a_{22}a_{55} + a_{23}(1 + a_{55}) - a_{44} \equiv 0 \mod{p}$ & $[x_2,x_5] = x_4^p$ & $x_4$\\
\hline
15 & $(a_{22}+a_{25})(1+ a_{11})- a_{15}a_{21} \equiv 0 \mod{p}$ & $[x_1,x_2] = x_1^p$ & $x_1$\\
\hline
16 & $a_{35}(1 + a_{23} + a_{22})- a_{25}( 1 + a_{33}) - a_{24} \equiv 0 \mod{p}$ & $[x_2,x_3] = x_2^p$ & $x_4$\\
\hline
17 & $-a_{15} - a_{15}a_{22} - a_{15}a_{23} \equiv 0 \mod{p}$ & $[x_1,x_2] = x_1^p$ & $x_4$\\
\hline
18 & $- a_{15}- a_{15}a_{33}- a_{34} \equiv 0 \mod{p}$ & $[x_1,x_3] = x_3^p$ & $x_4$\\
\hline
\end{tabular}
\caption{Table for the group $G_1$}
\end{table}

Now we are ready to prove Theorem B stated in the introduction. In the following proof, by (k) we mean the equation in the $k$th row of Table 1.
\vspace{.1in}

\noindent{\bf Proof of Theorem B.} It follows from Lemma \ref{lemma2} that $G_1$ is of order $p^9$, $\Phi(G_1) < \Z(G_1)$ and $\gamma_2(G_1) = \Phi(G_1)$ is elementary abelian. It is easy to show that the order of $\Autcent(G_1)$ is $p^{20}$. Since $\Aut(G_1)$ is elementary abelian for $p=2$, assume that $p$ is odd. We now prove that all automorphisms of $G_1$ are central.  
Let $\alpha$ be the automorphism of $G_1$ defined in \eqref{aut1}, i.e., $\alpha(x_i) = x_i\prod_{j=1}^5 x_i^{a_{ij}}$, where  $a_{ij}$ are non-negative integers for $1 \leq i, j \leq 5$.  Since $G_1/\Z(G_1)$ is elementary abelian, it is sufficient  to prove that $a_{ij} \equiv 0 \mod{p}$ for $1 \leq i, j \leq 5$.

Since $\alpha (x_1^p) = x_1^{p(1 + a_{11})}\prod_{j=2}^5 x_j^{pa_{1j}} \neq 1$, $x_5^p = 1$ and $a_{1j} \equiv 0 \mod{p}$ for $2 \le j \le 4$, it follows that $1 + a_{11}$ is not divisible by $p$. Therefore (6) and (7) give $a_{55} \equiv 0$ mod $p$ and $a_{23}  \equiv 0 \mod{p}$ respectively. Thus by (8) and (9) respectively, we get $a_{21} \equiv 0 \mod{p}$ and $a_{31}  \equiv 0 \mod{p}$. Using the fact that $a_{31}  \equiv 0 \mod{p}$, (10) reduces to the equation $a_{35}(1+ a_{11}) \equiv 0 \mod{p}$. Since  $1 + a_{11}$ is not divisible by $p$, we get  $a_{35}\equiv 0 \mod{p}$. Observe that $1 + a_{33}$ is not divisible by $p$. For, suppose $p$ divides $1 + a_{33}$. Since $a_{31}, a_{32}, a_{35}$ divisible by $p$ and $x_4  \in Z(G_1)$, it follows that $\alpha(x_3) \in Z(G_1)$, which is not true. Using this fact, it follows from (11) that $a_{11}  \equiv 0 \mod{p}$. Using above information, (13), (14) and (15) reduces, respectively, to the following equations.
\begin{eqnarray}
a_{33} - a_{44}  &\equiv 0 \mod{p},\label{eqnthmb1}\\
a_{22} - a_{44}  &\equiv 0 \mod{p},\label{eqnthmb2}\\
a_{22} + a_{25}  &\equiv 0 \mod{p}.\label{eqnthmb3}
\end{eqnarray}
 Subtracting equation \eqref{eqnthmb2} from equation \eqref{eqnthmb1}, we get  $a_{33} - a_{22}  \equiv 0 \mod{p}$. Adding this to equation \eqref{eqnthmb3} gives  $a_{33}+ a_{25}  \equiv 0 \mod{p}$. Using this fact after adding (12) to equation \eqref{eqnthmb3}, we get $a_{22}(1+a_{33}) \equiv 0 \mod{p}$. Since $1 + a_{33}$ is not divisible by $p$,  $a_{22}  \equiv 0 \mod{p}$. Thus equations \eqref{eqnthmb2} and \eqref{eqnthmb3} give $a_{44}  \equiv 0 \mod{p}$ and $a_{25}  \equiv 0 \mod{p}$ respectively. So $a_{33}  \equiv 0 
\mod{p}$ from equation \ref{eqnthmb1}.  Now (16) and (17) give $a_{24}  \equiv 0 \mod{p}$ and $a_{15}  \equiv 0 \mod{p}$ respectively. Finally, from equation (18) we get $a_{34}  \equiv 0 \mod{p}$. Hence  all ${a_{ij}}'s$ are divisible by $p$, which shows that $\alpha$ is a central automorphism of $G_1$. Since $\alpha$ was an arbitrary automorphism of $G_1$, we get $\Aut(G_1) = \Autcent(G_1)$.

It now remains to prove that $\Aut(G_1)$ is elementary abelian. Notice that $G_1$ is purely non-abelian. Since $\gamma_2(G_1) = \Phi(G_1)$, the exponent of $G_1/\gamma_2(G_1)$ is $p$. That $\Aut(G_1) = \Autcent(G_1)$ is elementary abelian now follows from Theorem \ref{thmJ}. This completes the proof of the theorem.
\hfill $\Box$

\vspace{.1in}

Finally we proceed to construct a finite $p$-group $G$ such that $\Aut(G)$ is elementary abelian, $\gamma_2(G) < \Phi(G)$ and $\Phi(G) = \Z(G)$ is elementary abelian. Let $p$ be any prime, even or odd. Define the group

\begin{eqnarray} \label{group3}
 G_2 &=& \langle x_1,x_2,x_3,x_4 \mid x_1^{p^2}= x_2^{p^2}=x_3^{p^2}= x_4^{p^2} = 1, [x_1,x_2] = 1, \\
    & &[x_1,x_3] = x_4^p, [x_1,x_4] = x_4^p,  [x_2,x_3] = x_1^p, [x_2,x_4] = x_2^p, [x_3,x_4] = x_4^p\rangle.\nonumber
\end{eqnarray}

It is easy to prove the following lemma.

\begin{lemma}\label{lemma3}
The group $G_2$ is a $p$-group of order $p^8$, $\gamma_2(G_2) < \Phi(G_2)$ and $\Z(G_2) = \Phi(G_2)$ is elementary abelian, where $p$ is any prime. Moreover, if $p$ is odd, then $G_2$ is regular.
\end{lemma}

Again, as mentioned in the introduction, it can be checked by using GAP that for $p = 2$, $\Aut(G_2)$ is elementary abelian. So from now onwards, we assume that $p$ is odd.
For $1 \le i \le 4$, let $e_{x_i} = x_1^{a_{i1}}x_2^{a_{i2}}x_3^{a_{i3}}x_4^{a_{i4}}$, where $a_{ij}$ are non-negative integers for $1\leq i,j\leq 4$.  Let $\alpha$ be an arbitrary automorphism of $G_2$. Since the nilpotency class of $G_2$ is $2$ and $\gamma_2(G_2)$ is generated by the set $\{x_1^p, x_2^p, x_4^p\}$,  we can write 
\begin{equation}\label{aut2}
 \alpha(x_i) = x_i \prod_{j=1}^4 x_j^{a_{ij}}
\end{equation}
 for some non-negative integers $a_{ij}$ for $1 \le i, j \le 4$.

The following table, which will be used in the proof of Theorem C below, is produced in a similar fashion as Table 1.

\begin{table}[h]
\centering
\begin{tabular}{|c||c|c|c|}
\hline
No. & equations & relations         & $x_i$'s  \\
\hline
\hline
1 & $a_{13}\equiv 0 \mod{p}$ & $[x_2,x_3] = x_1^p$ & $x_3$\\
\hline
2 & $a_{23}\equiv 0 \mod{p}$ & $[x_2,x_4] = x_2^p$ & $x_3$\\
\hline
3 & $a_{43}\equiv 0 \mod{p}$ & $[x_1,x_3] = x_4^p$ & $x_3$\\
\hline
4 & $a_{41}\equiv 0 \mod{p}$ & $[x_1,x_4] = x_4^p$ & $x_1$\\
\hline
5 & $a_{21}\equiv 0 \mod{p}$ & $[x_2,x_4] = x_2^p$ & $x_1$\\
\hline
6 & $a_{24}\equiv 0 \mod{p}$ & $[x_2,x_4] = x_2^p$ & $x_4$\\
\hline
7 & $a_{14}\equiv 0 \mod{p}$ & $[x_2,x_3] = x_1^p$ & $x_4$\\
\hline
8 & $a_{44}(1+a_{22}) \equiv 0 \mod{p}$ & $[x_2,x_4] = x_2^p$ & $x_2$\\
\hline
9 & $a_{11} + a_{11}a_{44} \equiv 0 \mod{p}$ & $[x_1,x_4] = x_4^p$ & $x_4$\\
\hline
10 & $a_{22} + a_{22}a_{33} + a_{33} - a_{11} \equiv 0 \mod{p}$ & $[x_2,x_3] = x_1^p$ & $x_1$\\
\hline
11 & $-a_{42}(1+a_{33}) \equiv 0 \mod{p}$ & $[x_3,x_4] = x_4^p$ & $x_1$\\
\hline
12 & $a_{12} + a_{12}a_{44} - a_{42} \equiv 0 \mod{p}$ & $[x_1,x_4] = x_4^p$ & $x_2$\\
\hline
13 & $a_{32} + a_{32}a_{44} - a_{34}a_{42} - a_{42} \equiv 0 \mod{p}$ & $[x_3,x_4] = x_4^p$ & $x_2$\\
\hline
14 & $a_{34}(1+a_{22}) - a_{12} \equiv 0 \mod{p}$ & $[x_2,x_3] = x_1^p$ & $x_2$\\
\hline
15 & $ a_{11}(1 +  a_{33} + a_{34}) + a_{33} + a_{34} - a_{44} \equiv 0 \mod{p}$ & $[x_1,x_3] = x_4^p$ & $x_4$\\
\hline
16 & $a_{31}+  a_{31}a_{44}+ a_{33}+ a_{33}a_{44} \equiv 0 \mod{p}$ & $[x_3,x_4] = x_4^p$ & $x_4$\\
\hline
\end{tabular}
\caption{Table for the group $G_2$}
\end{table}

Now we are ready to prove Theorem C stated in the introduction. By (k), in the following proof, we mean the equation in the $k$th row of Table 2.
\vspace{.1in}
 
\noindent{\bf Proof of Theorem C.}
It follows from Lemma \ref{lemma3} that $G_2$ is of order $p^8$, $\gamma_2(G_2) < \Phi(G_2)$ and $\Z(G_2) = \Phi(G_2)$ is elementary abelian.  It is again easy to show that the order of $\Autcent(G_2)$ is $p^{16}$. Since $\Aut(G_2)$ is elementary abelian for $p=2$, assume that $p$ is odd. As in the proof of Theorem B, to show that all automorphisms of $G_2$ are central, it is sufficient to show that $a_{ij} \equiv 0 \mod{p}$ for $ 1 \le i, j \le 4$.

Since $a_{21} , a_{23}, a_{24}$ are divisible by $p$, it follows that $(1 + a_{22})$ is not divisible by $p$. For, if $p$ divides $(1 + a_{22})$, then $\alpha(x_2) \in Z(G_2)$, which is not possible. Using this fact, (8) gives $a_{44} \equiv 0 \mod{p}$. Thus from (9) we get $a_{11} \equiv 0 \mod{p}$. Now we observe from (10) that $(1 + a_{33})$ is not divisible by $p$. For, suppose, $(1 + a_{33})$ is divisible by $p$, then using the fact that $a_{11} \equiv 0 \mod{p}$, (10) gives $a_{33} \equiv 0 \mod{p}$, which is not possible. Thus (11) gives $a_{42} \equiv 0 \mod{p}.$ Now using that $a_{42}$ and $a_{44}$ are divisible by $p$,  (12) and (13) give $a_{12} \equiv 0 \mod{p}$ and $a_{32} \equiv 0 \mod{p}$ respectively. Since $a_{12} \equiv 0 \mod{p}$ and $(1 + a_{22})$ is not divisible by $p$, (14) gives $a_{34} \equiv 0 \mod{p}.$ Using that $a_{11}$, $a_{34}$ and $a_{44}$ are divisible by $p$,  (15) gives $a_{33} \equiv 0 \mod{p}$. Now using that $a_{33}$ and $a_{44}$ are divisible by $p$, equation (16) gives $a_{31} \equiv 0 \mod{p}$. Since $a_{11}$ and $a_{33}$ are divisible by $p$,  equation (10) gives $a_{22} \equiv 0 \mod{p}$. Hence $a_{ij} \equiv 0 \mod{p}$ for $1 \le i, j \le 4$.

Since $\Z(G_2)$ is elementary abelian, $\Aut(G_2) = \Autcent(G_2)$ is elementary abelian by Theorem \ref{thmJ}. This completes the proof of the theorem. \hfill $\Box$

\noindent{\bf Acknowledgements.} Authors thank Prof. Mike Newman  for his useful comments and suggestions.


\begin{thebibliography}{999}

\bibitem{AY65}
J.~E. Adney, T.  Yen, \emph{Automorphisms of a $p$-group},  Illinois J. Math. {\bf  9}  (1965),  137 - 143.
 
\bibitem{BY98}
G. Ban, S. Yu, \emph{Minimal abelian groups that are not automorphism groups},  Arch. Math. (Basel) {\bf 70}  (1998), 427 - 434.

\bibitem{BJ08}
Y. Berkovich, Z. Janko, \emph{Groups of prime power order - Volume 2}, Walter de Gruyter, Berlin, New York (2008). 

\bibitem{mC87}
M.~J. Curran, \emph{Semidirect product groups with abelian automorphism groups},
J. Austral. Math. Soc. Ser. A {\bf 42} (1987), 84 - 91.

\bibitem{bE75}
B.~E. Earnley,  \emph{On finite groups whose group of automorphisms is abelian}, Ph. D. thesis, Wayne State University (1975), Dissertation Abstracts, V. 36, p. 2269 B.

\bibitem{gap}
\emph{The GAP Group, GAP - Groups, Algorithms, and Programming}, Version 4.4.12, 2008 (http://www.gap-system.org).

\bibitem{pH95}
P. Hegarty, \emph{Minimal abelian automorphism groups of finite groups}, Rend. Sem. Mat. Univ. Padova {\bf 94} (1995), 121 - 135. 

\bibitem{HL74}
H. Heineken, H. Liebeck, \emph{The occurrence of finite groups in the automorphism group of nilpotent groups of class $2$}, Arch. Math. (Basel) {\bf 25}  (1974), 8 - 16.

\bibitem{hH79}
H. Heineken,  \emph{Nilpotente Gruppen, deren samtliche Normalteiler charakteristisch sind}, Arch. Math. (Basel) {\bf 33}  (1979/80), 497 - 503.

\bibitem{pH08}
H. Hilton, \emph{An introduction to the theory of groups of finite order}, Oxford, Clarendon Press  (1908).

\bibitem{cH27}
C. Hopkins,  \emph{Non-abelian groups whose groups of isomorphisms are abelian}, The Annals of Mathematics, 2nd Ser., {\bf 29}, No. 1/4. (1927 - 1928), 508-520.

\bibitem{mJ06}
M.~H. Jafari,  \emph{Elementary abelian $p$-groups as central automorphism groups}, Comm. Algebra {\bf 34} (2006), 601 -607.

\bibitem{JY12}
V.~K. Jain, M.~K. Yadav,  \emph{On finite $p$-groups whose automorphisms are all central}, Israel J. Math. {\bf 189} (2012), 225 - 236.

\bibitem{aJ02}
A. Jamali,  \emph{Some new non-abelian $2$-groups with abelian automorphism groups}, J. Group Theory, {\bf 5} (2002), 53 - 57.

\bibitem{JK75}
D. Jonah, M. Konvisser, \emph{Some non-abelian $p$-groups with abelian automorphism groups}, Arch. Math. (Basel) {\bf 26} (1975), 131 - 133.

\bibitem{aM08}
A. Mahalanobis,  \emph{The Diffie-Hellman Key Exchange Protocol and non-abelian nilpotent groups}, Israel J. Math., {\bf 165}  (2008), 161 - 187. 

\bibitem{gM13}
G.~A. Miller, \emph{A non-abelian group whose group of isomorphism is abelian}, Mess. of Math. {\bf 43} (1913-1914), 124 - 125.

\bibitem{mM94}
M. Morigi,  \emph{On $p$-groups with abelian automorphism group}, Rend. Sem. Mat. Univ. Padova  {\bf 92}  (1994), 47 - 58.

\bibitem{mM95}
M. Morigi, \emph{On the minimal number of generators of finite non-abelian $p$-groups having an abelian automorphism group},  Comm. Algebra  {\bf 23}  (1995), 2045 - 2065.

\bibitem{rS82}
R.~R. Struik,  \emph{Some non-abelian $2$-groups with abelian automorphism groups}, Arch. Math. (Basel) {\bf 39} (1982), 299 - 302.

\end{thebibliography}
\end{document}